\definecolor{NoteColor}{rgb}{1,0,0}
\renewcommand{\textsc}{\textcolor{red}}
\newtheorem{theorem}{\rm\bf Theorem}[section]
\newtheorem{proposition}[theorem]{\rm\bf Proposition}
\newtheorem{lemma}[theorem]{\rm\bf Lemma}
\newtheorem*{theorem 1}{\rm\bf Proposition 1}
\newtheorem*{theorem 2}{\rm\bf Proposition 2}
\theoremstyle{definition}
\theoremstyle{remark}
\def\interieur#1{\mathord{\mathop{\kern 0pt #1}\limits^\circ}}
\title[Deforming Hexagons]
{Deforming hyperbolic hexagons with applications to the arc and the Thurston metrics on Teichm\"uller spaces}
\author{Athanase Papadopoulos}
\thanks{Athanase Papadopoulos was partially supported by the grant ANR-12-BS01-0009 (G\'eom\'etrie de Finsler et applications).}
\address{Athanase Papadopoulos,  Universit{\'e} de Strasbourg and CNRS,
7 rue Ren\'e Descartes,
 67084 Strasbourg CEDEX, France}
\email{athanase.papadopoulos@math.unistra.fr}
\author{Sumio Yamada }
\thanks{Sumio Yamada was partially supported by JSPS KAKENHI 24340009 and 16K13758}
 \address{Sumio Yamada, Gakushuin University, 1-5-1 Mejiro, Tokyo, 171-8588, Japan}
\email{yamada@math.gakushuin.ac.jp}
\date{\today}
\begin{document}

  \maketitle
  \begin{abstract}
  For each right-angled hexagon in the hyperbolic plane, we construct a one-parameter family of right-angled hexagons with a Lipschitz map between any two elements in this family,  realizing the smallest Lipschitz constant in the homotopy class of this map relative to the boundary.  
As a consequence of this construction, we exhibit new geodesics for the arc metric on the Teichm\"uller space of an arbitrary surface of negative Euler characteristic with nonempty  boundary. We also obtain new geodesics for Thurston's metric on Teichm\"uller spaces of hyperbolic surfaces without boundary. Our results generalize results obtained in the two papers \cite{PT1} and \cite{PT2}.
  \end{abstract}

\noindent AMS Mathematics Subject Classification:   32G15; 30F30;  30F60; 53A35.

\medskip

\noindent Keywords:  hyperbolic geometry;  Teichm\"uller space; arc metric; geodesic; Lipschitz map; Thurston's metric; deforming hyperbolic hexagons.
\medskip

\section{Introduction}\label{intro}

 The Teichm\"uller space of a surface admits several natural metrics, starting with the Teichm\"uller metric introduced by Teichm\"uller in 1939, followed by the Weil-Petersson metric introduced by Weil in 1958, and there are many others. It is reasonable to say that the next metric which is most actively investigated today is the one introduced by Thurston in 1985 or 1986, which carries now the name Thurston's metric. These three metrics, along with   others defined on Teichm\"uller space,  were studied from various points of view: the infinitesimal structure (Finsler or Riemannian), the geodesics, the convexity properties, the isometries, the quasi-isometries, the curvature properties, the boundary structure, etc.  Some basic questions concerning these properties were solved and others, more difficult, remain open and make the subject a living one.
 
In the present paper, the setting is the non-reduced Teichm\"uller theory of surfaces with boundary. We mean by this that the homotopies that we consider, in the definition of the equivalence relation defining Teichm\"uller space, do not necessarily fix  pointwise the boundary of the surface. In general, the metrics on Teichm\"uller space were defined for closed surfaces or surfaces with punctures (or distinguished points). Some of these metrics, like the Teichm\"uller metric, admit a straightforward generalization to the case of surfaces with boundary, but other metrics do not generalize as such, and one has to modify their definition to make them fit to surfaces with boundary. One example is Thurston's metric, whose modification, the so-called arc metric, was studied in the papers \cite{ALPS}, \cite{LPST1},  \cite{PT1}, \cite{PT2}. 
We shall recall the definition of the arc metric below.
Several basic questions concerning this metric still resist. For instance, it is unknown whether it is Finsler,  its isometry group is still not identified, and it is  unknown whether this metric coincides with the Lipschitz metric. (All these results are known for Thurston's metric.) In the present paper, we construct new families of geodesics for this metric. In order to state more precisely the results, we start with some notation.

Let $S$ be a surface of finite type with nonempty boundary.
The hyperbolic structures on $S$ that we consider are such that the boundary components are closed geodesics. 

 A simple closed curve on $S$  is \emph{essential} if it is neither homotopic to a point nor to a puncture (but it can be homotopic to a boundary component).

An \emph{arc} on $S$ is the image of a compact interval of $\mathbb{R}$ by a proper embedding, that is, the interior of the arc is embedded in the interior of $S$ and its  endpoints are on the boundary of $S$. In this paper, when we deal with  homotopies of arcs, we only consider homotopies that are relative to $\partial S$, that is, they keep the endpoints of the arc on the boundary of the surface (but they do not
necessarily fix pointwise the points of that boundary). An arc is \emph{essential} if it is not homotopic to an arc contained in $\partial S$.

We shall use the following notation:


  $\mathcal{B}$ is the union of the set of homotopy classes of essential arcs on $S$ and  the set of homotopy classes of simple closed curves homotopic to a boundary component of $S$. 

 $\mathcal{S}$ is the set of homotopy classes of essential simple closed curves on $S$.

Suppose that the surface $S$ is equipped with a hyperbolic structure $g$. In any homotopy class of essential arc in $S$, there is a unique geodesic arc whose length is minimal among the arcs in that class relative the boundary. This geodesic arc makes at its endpoints right angles with the boundary of $S$. 
 We denote by $\ell_\gamma(g)$ the length of this geodesic arc, and we call it the \emph{geodesic length} of $\gamma$ for the hyperbolic metric $g$.  Likewise, for any element $\gamma$ of $ \mathcal{S}$, we denote by 
$\ell_\gamma(g)$  the length of its unique geodesic representative for the hyperbolic metric $g$.

 Let $\mathcal{T}(S)$ be the Teichm\"uller space of $S$. We view $\mathcal{T}(S)$ as the space of homotopy classes of hyperbolic structures on $S$ with geodesic boundary, where the lengths of the boundary components are not fixed. The   \emph{arc metric} $\mathcal{A}$ on $\mathcal{T}(S)$ is defined by the formula
 \begin{equation} \label{eq:arc}
\mathcal{A} (g,h)= \sup_{\gamma\in \mathcal{B}\cup \mathcal{S}} \log \frac{\ell_\gamma(h)}{\ell_\gamma(g)} 
\end{equation}
where $g$ and $h$ are hyperbolic structures on $S$.
This is an asymmetric metric (that is, it satisfies all the axioms of a metric except the symmetry axiom). It was introduced in \cite{LPST} and it is an analogue for surfaces with boundary of Thurston's asymmetric metric on the Teichm\"uller space of a surface without boundary (possibly with cusps) \cite{Thurston}. It was shown in \cite{LPST}  (Proposition 2.13) that one obtains the same metric by using Formula (\ref{eq:arc}) except that the supremum is taken now over $\mathcal{B}$ instead of $ \mathcal{B}\cup \mathcal{S}$. 
In other words, we also have
\begin{equation} \label{eq:arc1}
\mathcal{A} (g,h)= \sup_{\gamma\in \mathcal{B}} \log \frac{\ell_\gamma(h)}{\ell_\gamma(g)} 
\end{equation}
The arc metric is studied in  the papers  \cite{ALPS}, \cite{LPST1},  \cite{PT1} and \cite{PT2}.

We now recall the definition of the Lipschitz metric on the Teichm\"uller space of a surface with boundary. The definition is the same as the one of the Lipschitz metric defined by Thurston on Teichm\"uller spaces of surfaces without boundary. 

One first defines the {\it Lipschitz constant} of a homeomorphism $f: (X,d_X)\to (Y,d_Y)$ between two metric spaces by the formula
\begin{equation}\label{Lip}
\hbox{Lip}(f)=\sup_{x\neq y\in X}\frac{d_{Y}\big{(}f(x),f(y)\big{)}}{d_{X}\big{(}x,y\big{)}}\in\mathbb{R}\cup\{\infty\}.
\end{equation}
The homeomorphism $f$ is said to be \emph{Lipschitz} if its Lipschitz constant is finite.

Given an ordered pair of hyperbolic structures $g$ and $h$ on $S$, the \emph{Lipschitz distance} between them (and between the corresponding points in the Teichm\"uller space $\mathcal{T}(S)$) is defined as
\begin{equation} \label{eq:Lip}
L(g,h)= \log \inf_{f\sim \mathrm{Id}_{S}} \hbox{Lip}(f)
\end{equation}
where the infimum is taken over all homeomorphisms $f:(S,g)\to (S,h)$ in the homotopy class of the identity of $S$.

In the case of surfaces without boundary, Thurston's metric and the Lipschitz metric coincide. This is a result of Thurston in \cite{Thurston}. It is unknown whether in the case of surfaces with boundary the Lipschitz and the arc metrics coincide.

In Thurston's theory for surfaces without boundary developed in \cite{Thurston}, maps between ideal triangles are the building blocks for the construction of geodesics for Thurston's metric on Teichm\"uller space. The geodesics obtained in this way are the so-called ``stretch lines" and Thurston proves that any two points in the Teichm\"uller space of $S$ are joined by a concatenation of finitely many stretch lines. 
 It is  possible to construct a class of geodesics for the arc metric $\mathcal{A}$ using Thurston's method for stretch lines. For this, a complete maximal geodesic lamination is needed (in the language of \cite{Thurston}, this will be the geodesic lamination which is maximally stretched); for instance, we can take a lamination whose leaves spiral along the  boundary components of the surface $S$. We then apply Thurston's method described in \cite{Thurston} for the construction of stretch lines using the stretch maps between ideal triangles and gluing them over all the ideal triangles that are the connected components of the complement of the lamination. To see that the one-parameter family of surfaces obtained in this way is a geodesic for the arc metric $\mathcal{A}$, one can double the surface $S$  along its boundary components and consider the resulting one-parameter family of hyperbolic structures on the doubled surface $S^d$. This is a geodesic for Thurston's metric on $S^d$. The restriction to $S$ of this one-parameter family of deformations of hyperbolic structures is a  geodesic for the metrics $\mathcal{A}$ and $L$ on the Teichm\"uller space of the surface with boundary. A geodesic for the arc metric defined in this way is of a special type (that is, not all geodesics for the arc metric are obtained in this manner). Along this geodesic, the lengths of all the geodesic boundary components of $S$ are multiplied by the same constant factor.

Another construction of $\mathcal{A}$-geodesics in the setting of surfaces with boundary is obtained by taking as building blocks right-angled hexagons instead of ideal triangles. In the present paper, we shall introduce canonical maps between such hexagons which depend on a parameter $K$. By varying $K$ we obtain geodesics for the arc metric. Each right-angled hexagon belongs to such a family of deformations, and between each pair of hexagons of the same family we exhibit a 
Lipschitz map that realizes  the best Lipschitz constant in its homotopy class relative to the boundary. Gluing the hexagons along pieces of their boundaries, we obtain deformations of the hyperbolic surfaces that are geodesics for the arc metric. These geodesics are also
geodesic for the Lipschitz metric and the arc and the Lipschitz metrics coincide on these 
lines. By gluing surfaces with boundary along their boundary components, we obtain geodesics for  Thurston's metric that are different from the stretch lines.
This generalizes the set of results obtained in the two papers \cite{PT1} and \cite{PT2}.

\section{Geometry of right-angled hexagons} \label{s:geo}

\subsection{Three types of right-angled hexagons}
For the construction of Lipschitz maps between arbitrary right-angled hexagons, we shall divide such a hexagon $H$ into three regions (one such region may possibly be empty) which carry natural coordinates; that is, the points in such a region are parametrized by pairs of real numbers. The coordinates in each region are induced by a pair of orthogonal foliations $F$ and $G$ on $H$, which we now define.

 \begin{figure}[!hbp] 
\centering
 \psfrag{s1}{\small $s_3$}
  \psfrag{s2}{\small $s_1$}
   \psfrag{s3}{\small $s_2$}
    \psfrag{l1}{\small $l_3$}
  \psfrag{l2}{\small $l_1$}
   \psfrag{l3}{\small $l_2$}
\psfrag{t1}{\small $t_3$}
  \psfrag{t2}{\small $t_1$}
   \psfrag{t3}{\small $t_2$}
\includegraphics[width=0.50\linewidth]{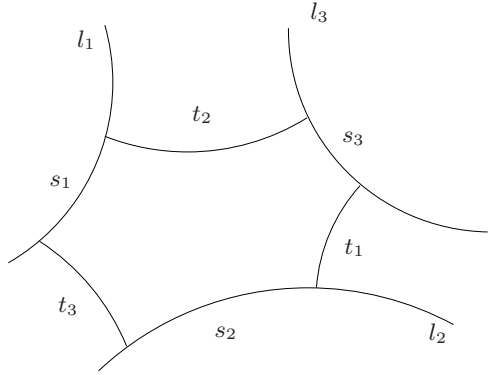}    
\caption{\small {Six geodesic lines enclosing a hexagon.}}   \label{hex}  
\end{figure}

We consider three pairwise non-consecutive edges $t_1, t_2, t_3$  of a right-angled hexagon $H$, and we call them the {\it short edges}. The three lengths $\lambda_1, \lambda_2, \lambda_3$ of $t_1, t_2, t_3$ may or may not satisfy the triangle inequality. Up to a permutation of the indices,  there are three cases:
\begin{enumerate}
\item Type I: $\lambda_1,\lambda_2,\lambda_3$ satisfy the three strict triangle inequalities.
\item Type II:  $\lambda_1+\lambda_2=\lambda_3$.
\item Type III: $\lambda_1+\lambda_2<\lambda_3$.
\end{enumerate}

We now give another characterization of the three distinct types.

 Recall that two geodesic lines in the hyperbolic plane are said to be hyper-parallel when they intersect each other neither in the hyperbolic plane, nor on the geometric boundary of this plane. In the case where the geodesics intersect each other on the boundary, the geodesics are said to be \emph{asymptotic}, or \emph{parallel}. 
 
 Consider three pairwise hyper-parallel geodesic lines $l_1, l_2, l_3$ in the hyperbolic plane, relatively positioned so that for each geodesic $l_i$, the other two geodesics $l_j, l_k$,  $(j, k \neq i)$ lie on the same side of $l_i$. In particular, we are excluding the possibility that some $l_i$ and $l_j$ are asymptotic to each other. 
Each pair $l_i, l_j$ $(i \neq j)$  has a common perpendicular geodesic segment 
$t_k$ $(k \neq i, j)$. This common perpendicular segment, which we call a \emph{short  edge}, is unique.  We denote by $\tilde{l}_1, \tilde{l}_2, \tilde{l}_3$, the three geodesic lines that contain the short edges  $t_1, t_2, t_3$ respectively.  The six geodesic lines $l_1, l_2, l_3, \tilde{l}_1, \tilde{l}_2, \tilde{l}_3$ enclose a hexagon with six right angles (Figure \ref{hex}).  The side opposite to a short edge $t_i$ will be denoted by $s_i \subset l_i$, and  called a \emph{long edge}. 

There exists a unique point $O$ which is equidistant from $\tilde{l}_1, \tilde{l}_2, \tilde{l}_3$. The three types listed above correspond respectively to the cases where the point $O$ lies inside the right-angled hexagon $H$ (Type I), or on one 
of the long edges $s_i$ (Type II), or outside $H$ (Type III); cf. Figure \ref{center}.

One should be aware of the fact that interchanging the short and long edges of a given right-angled hexagon makes the resulting center $O$ different from the original 
center unless the hexagon has a ${\mathbb Z}_3$-rotational symmetry.

 From now on, when we refer to a right-angled hexagon, it will be understood that  a choice of short and long edges has been made.

\subsection{The tripod and the hypercycle foliation of a right-angled hexagon}
A right-angled hexagon $H$ is naturally equipped with 
a measured foliation  which is a union of three foliated regions $F_1, F_2, F_3$ with disjoint interiors (one of them may be empty). We explain below the construction of this foliation.

For each $i=1,2,3$, we  consider the orthogonal projection of $O$ onto the geodesic $\tilde{l}_i \supset t_i$, which we denote by $A_i$. We obtain a tripod $T$ whose edges $OA_1, OA_2$ and $OA_3$ have equal lengths $d>0$, and where each edge $OA_i$ meets $\tilde{l}_i$ perpendicularly. In Figure \ref{center}, he have represented the tripod in the three cases of hexagons (Types I, II and III). 
 \begin{figure}[!hbp] 
\centering
   \psfrag{s3}{\small $s_1$}
    \psfrag{l1}{\small $\tilde{l}_2$}
  \psfrag{l2}{\small $\tilde{l}_3$}
   \psfrag{l3}{\small $\tilde{l}_1$}
    \psfrag{A3}{\small $A_1$}
     \psfrag{O}{\small $O$}
\includegraphics[width=1\linewidth]{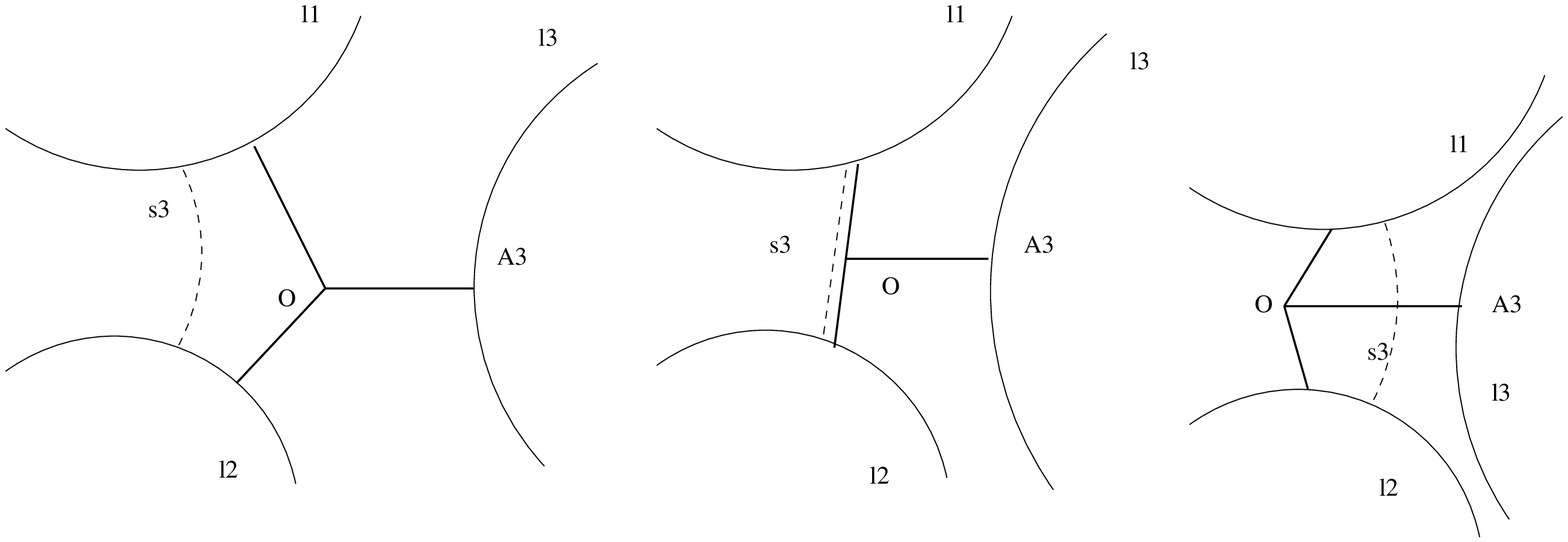}    
\caption{\small {The tripod is drawn in bold lines. The center $O$ of the hexagon may be in the interior of the hexagon (figure to the left, Type I), or on one side (the side $s_1$ in the figure in the middle, Type II), or outside the hexagon (figure to the right, Type III). In the case in the middle, the side $s_1$ (dashed line) coincides with the union of two edges of the tripod.}}   \label{center}  
\end{figure}

Choose one pair among the three tripod edges, say, $OA_1$ and $OA_2$.  The pair  $OA_1$ and $OA_2$ and the long edge $s_3$ together with (part of) the short edges $t_1$ and $t_2$ bounds a region, which is a pentagon $P_3$ with four right-angled corners (Figure \ref{penta1}). 
 \begin{figure}[!hbp] 
\centering
 \psfrag{t1}{\small $t_1$}
  \psfrag{t2}{\small $t_2$}
   \psfrag{A1}{\small $A_1$}
    \psfrag{A2}{\small $A_2$}
  \psfrag{s3}{\small $s_3$}
   \psfrag{P3}{\small $P_3$}
     \psfrag{O}{\small $O$}
\includegraphics[width=0.55\linewidth]{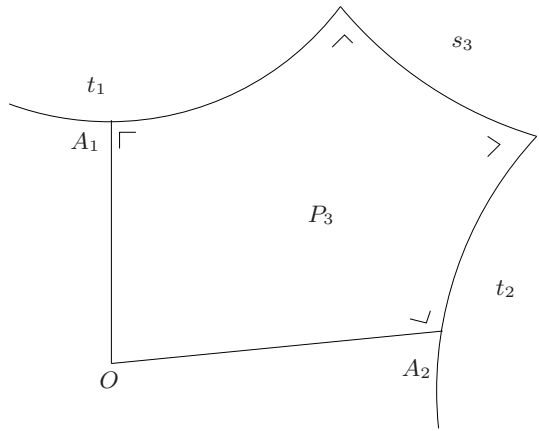}    
\caption{\small {The pentagon $P_3$}}   
\label{penta1}  
\end{figure}

Draw in $P_3$ a family of hypercycles  each of which is a locus of equidistant points measured from the long edge $s_3$. In the hyperbolic plane, equidistant points to geodesics are classically  called hypercycles, and we shall use this terminology. $P_1$ and $P_2$ are similarly defined.

For each $i=1,2,3$,  $P_i$ consists of two regions $R_i$ and $C_i$, where the former is foliated by hypercycles whose endpoints are on the long edge $t_j$ and $t_k$ for $j, k \neq i$, and the latter is foliated by  hypercycles whose endpoints are on the tripod edges $OA_j$ and $OA_k$.  The foliation of $P_i$ equipped with its foliation $F_i$ is represented in Figure \ref{penta2}.
\begin{figure}[!hbp] 
\centering
 \psfrag{t1}{\small $t_1$}
  \psfrag{t2}{\small $t_2$}
   \psfrag{Aj}{\small $A_j$}
    \psfrag{Ak}{\small $A_k$}
  \psfrag{Ci}{\small $C_i$}
   \psfrag{Ri}{\small $R_i$}
     \psfrag{O}{\small $O$}
\includegraphics[width=0.45\linewidth]{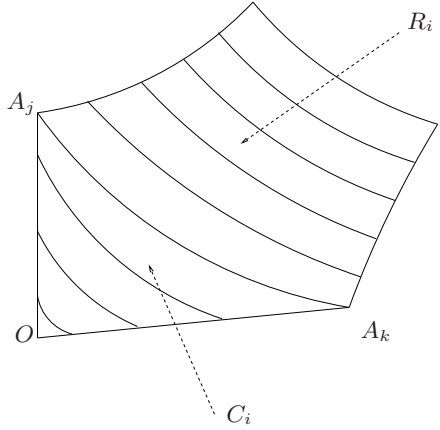}    
\caption{\small {The pentagon $P_i$ equipped with its foliation $F_i$.}}   
\label{penta2}  
\end{figure}

We consider the region 
\[
\tilde{H} := P_1 \cup P_2 \cup P_3 
\]
For hexagons of Type I and II, $\tilde{H} $ coincides with the right-angled hexagon  $H$, and for Type III, $\tilde{H} $ strictly contains $H$. The leaves of the three foliations $F_1, F_2, F_3$ may possibly intersect in $\tilde{H}$. Restrict the three foliations $F_1, F_2, F_3$ to the original hexagon $H$ to obtain a foliation of $H$.

We call the union of three regions $(\cup_i C_i) \cap H$ the \emph{central region} $C$ of $H$.   
 \begin{figure}[!hbp] 
\centering
 \psfrag{1}{\small $P_1$}
  \psfrag{2}{\small $P_2$}
   \psfrag{3}{\small $P_3$}
   \psfrag{A1}{\small $A_1$}
  \psfrag{A2}{\small $A_2$}
   \psfrag{A3}{\small $A_3$}
   \psfrag{s1}{\small $s_1$}
  \psfrag{s2}{\small $s_2$}
   \psfrag{s3}{\small $s_3$}
\includegraphics[width=0.65\linewidth]{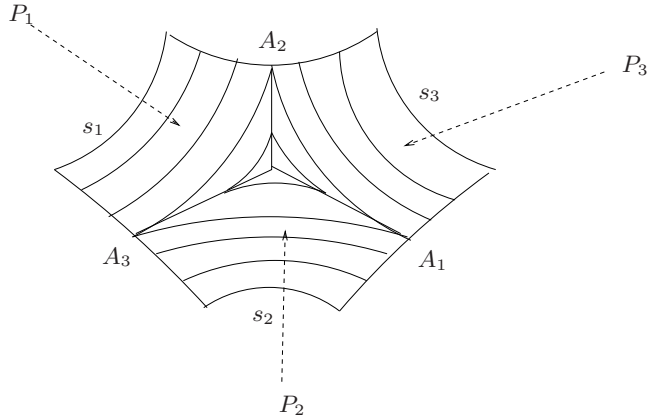}    
\caption{\small {The hexagon $H$ is the union of the three pentagons $P_1,P_2,P_3$.}}   \label{penta}  
\end{figure}

The supports of the foliations $F_i$ by hypercycles restricted to $R_i \subset P_i$ have a natural structure of rectangles and these foliations are  equipped with transverse measures induced from the Lebesgue measure on the boundaries of the supports, whose total masses $L_1,L_2,L_3$ satisfy the equations
 \[
  \displaystyle \begin{cases} \displaystyle
 \lambda_1=L_2+L_3\\
 \lambda_2=L_1+L_3\\
\lambda_3=L_1+L_2.
  \end{cases}
\]
with an appropriate meaning when one of the lengths $L_i$ is negative, as we discuss now. This is represented in Figure \ref{negative} below.

For Type I, the central region $C$ is bounded by the three hypercycles distant from $s_i$ by $L_i> 0$ for each $i$ and $C = C_1 \cup C_2 \cup C_3$. Types II and III correspond respectively  to the cases when one of the total masses $L_i$, say $L_3$,  is zero or strictly negative, and the central region is bounded by two hypercycles and one boundary geodesic segment $s_3$. In Types II and III, when $L_3 \leq 0$ (or equivalently when the set ${\rm interior}(F_3) \cap H$ 
is empty) the strict triangle inequality
\[
\lambda_1 + \lambda_2 > \lambda_3
\]
among the lengths of the short sides is not satisfied. The three types of foliations corresponding to Types I, II and III are represented in Figure \ref{foliation}.
 \begin{figure}[!hbp] 
\centering
 \psfrag{s1}{\small $s_1$}
  \psfrag{s2}{\small $s_2$}
   \psfrag{s3}{\small $s_3$}  
     \psfrag{t1}{\small $t_1$}
  \psfrag{t2}{\small $t_2$}
   \psfrag{t3}{\small $t_3$}
     \psfrag{O}{\small $O$}
      \psfrag{T1}{\small Type I}
       \psfrag{T2}{\small Type II}
        \psfrag{T3}{\small Type III}
\includegraphics[width=0.750\linewidth]{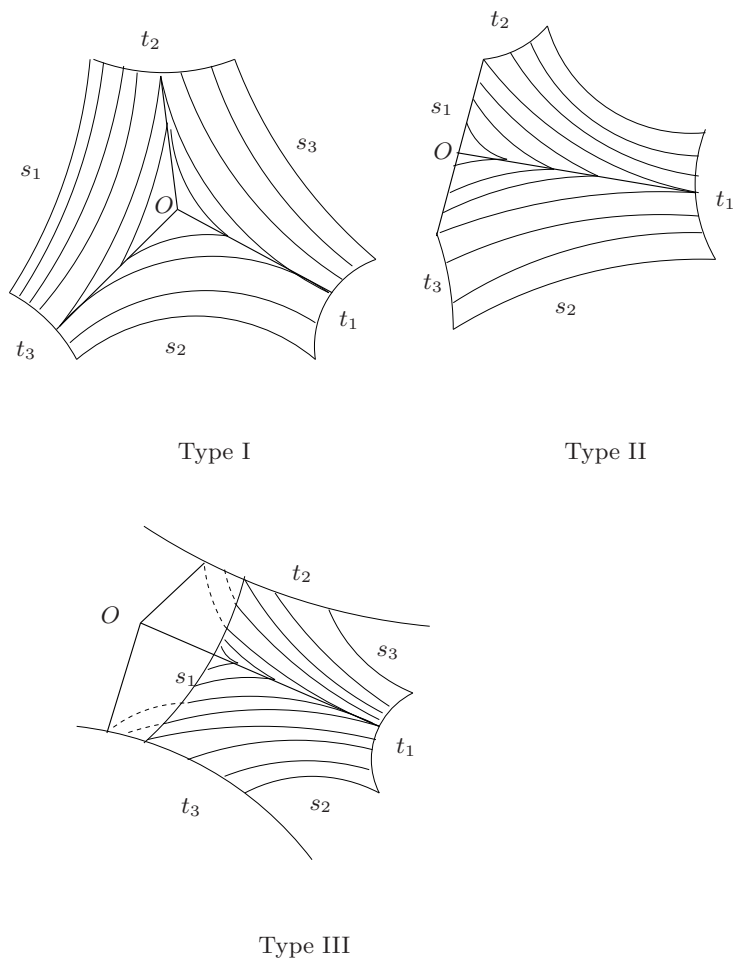}    
\caption{\small {The three types of hypercycle foliations of a right-angled hexagon.}}   \label{foliation}  
\end{figure}

In Type I, corresponding to the case to the left picture of Figures \ref{center} and \ref{foliation},  for each $i =1,2,3$, $A_i$ divides each short edge $t_i$ at an inner point. In this case  $L_1,L_2,L_3$ are all positive.   In Type II, two of the points $A_i$, say $A_2$ and $A_3$, coincide with endpoints of $t_2$ and $t_3$ respectively. In this case, we have $L_1=0$ and the foliation $H$ is of the type depicted on the upper right hexagon of Figure \ref{foliation}.
Finally, Type III corresponds to the case where two of the $A_i$, say $A_2$ and $A_3$,  divide the short edges $t_2$ and $t_3$ externally. In this case  we have $L_1 < 0$, and the foliation $H$ is as depicted in the bottom hexagon of Figure \ref{foliation}, and in Figure \ref{negative}.  

%
%
%
%
%
%
%
%

We now look into the Type I case more closely using the Poincar\'e disc model and taking the point $O$ at the center of this disc. 
 \begin{figure}[!hbp] 
\centering
 \psfrag{L1}{\small $L_1<0$}
  \psfrag{L2}{\small $L_2$}
   \psfrag{L3}{\small $L_3$}
   \psfrag{O}{\small $O$}
    \psfrag{2}{\small $\alpha_2$}
     \psfrag{3}{\small $\alpha_3$}
     \psfrag{A1}{\small $A_1$}
  \psfrag{A2}{\small $A_2$}
   \psfrag{A3}{\small $A_3$}
\includegraphics[width=0.80\linewidth]{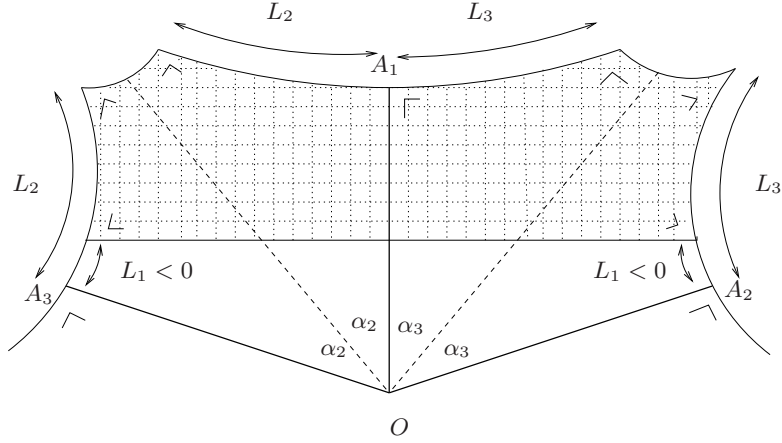}    
\caption{\small {The shaded region represents a Type III hexagon. The tripod is represented in bold lines. The point $O$ lies outside $H$.}}   \label{negative}  
\end{figure}
The hexagon $H$ is divided into three pentagons $P_1, P_2, P_3 $ by the geodesic tripod $T= \cup_{i=3} {\overline{OA_i}}$ whose edges have the same hyperbolic length $d$, each edge 
meeting one of the short edges perpendicularly.   Each pentagon $P_i$ is foliated by pieces of hypercycles which are equidistant from the long edges $s_i$ of $H$, and the hexagon $H$ is foliated as a union of three 
foliations $F_i$  $(i = 1,2, 3)$ (Figure \ref{penta}).   The pentagon $P_i$ has a ${\mathbb Z}_2$-symmetry across the angle bisector of $\angle {A_jO A_k}$ $(j, k \neq i)$. 

For each $i=1,2,3$, the value of the angle between $OA_j$ and $OA_k$ ($j,k\not=i$) is set to be equal to $2\alpha_i$. (The half-values, $\alpha_i$, will play a role below.)

The Type II hexagon is a limiting case of the Type I hexagon, where the two edges $\overline{OA_2}$ and $\overline{OA_3}$ form an angle $2 \alpha_1 = \pi$. In this case, two edges of the tripod are aligned, the pentagon $P_1$ is collapsed to the long edge $s_1$, and the hexagon is covered by the remaining two foliated regions. In other words, $H$ is a union of two pentagons $P_2$ and $P_3$ whose central vertex angles at $O$ satisfy $2\alpha_2 +2 \alpha_3=\pi$.

In the Type III hexagon, represented in Figure \ref{negative}, the region $H$ is a proper subset of two pentagons $P_2$ and $P_3$ (these are the left and right halves of the hexagon pictured) with its central angles  satisfying $2\alpha_2 + 2\alpha_3 < \pi$, or equivalently $2\alpha_1 > \pi$.   

 In what follows, we shall concentrate the discussion on Type I 
hexagons. Type II and III hexagons will be 
considered when needed.

\subsection{Orthogonal coordinate system on right-angled hexagons} 
We equip each pentagon $P_i$ with a second foliation $G_i$ transverse to $F_i$ whose leaves are the fibers of the nearest point projection map $\pi_i: P_i \rightarrow s_i$.  It turns out that the leaves of this foliation are geodesics which intersect orthogonally the leaves of $F$.

 In a Type I hexagon, we parameterize the leaves of the foliation $F_i$ by $0\leq  u_i \leq 2$ as follows. 
 
 For each $i$, the region $P_i$ is foliated 
by leaves $\{F_i(u_i)\}$ where $F_i(0)$ is the long edge $s_i$, $F_i(1)$ is the side of the central region in $P_i$, and $F_i(2)$ is the origin  $O$ (a degenerate leaf).  In 
between, for $0< u_i<1$, 
we interpolate the leaves proportionally to the hyperbolic distance from the long edge $s_i$. For $1< u_i <2$, the interpolation is done proportionally to the 
hyperbolic length along the edges of the geodesic tripod $T$.  

We parametrize the leaves of $G_i$ by $0 \leq v_i \leq 2$ such that $G_i(0)$ and $G_i(2)$ are parts of the short edges $t_j, t_k$ with $j, k \neq i$ of length $L_i$ sandwiching the long edge  $s_i$, and such that $G_i(1)$  is the geodesic segment  from the origin $O$ meeting the long edge $s_i$ perpendicularly at its midpoint and bisecting the central angle $2 \alpha_i$ at $O$.  This geodesic segment is shared by the two congruent quadrilaterals  $Q_i$ and $\hat{Q}_i$ with $P_i = Q_i \cup \hat{Q}_i$.  In between, the interpolation is done proportionally 
to hyperbolic length along the long edge $s_i$. 

With this pair of foliations $\{(F_i, G_i)\}_{i=1,2,3}$  where  the leaves are parameterized by $u_i$ and  $v_i$, the hexagon $P_i$ has a coordinate system. Namely each point $ p \in P_i \subset H$ is identified with an ordered pair $(u_i, v_i)$.  Note that 
along the gluing edges of the three pentagons $P_1, P_2$ and $P_3$, the parameters $u_i$ and $u_j$ with $i \neq j$ are compatible, that is,  $u_i(p)=u_j(p)$ if $p$ lies on the edge of the tripod between $P_i$ and $P_j$.  

A Type II and III hexagon $H$ has a similar coordinate system, which is just the restriction of the three pentagons $P_i$ to the hexagon $H$.

\section{Geometry of hyperbolic quadrilaterals with three right angles}\label{s:hyp}

A congruent pair of hyperbolic geodesic quadrilaterals with three right angles with opposite orientations  form a geodesic pentagon with four right angles (see Figure \ref{QQ} below). We study in this section  hyperbolic quadrilaterals with three right angles
We recall a set of classical formulae (see e.g. \cite{Fenchel}) for such quadrilaterals, relating the side lengths and the non-right angle $\alpha < \pi/2$ of  the quadrilateral. Such a quadrilateral is usually called a trirectangular quadrilateral.

 We represent the quadrilateral in the Poincar\'e disc model of the hyperbolic plane. We assume that $\alpha$ is positioned at the origin $O$, with two edges being radial segments, one of which (the side $OA$) having length $d$ and the other one (the side $OC$) having length $L + h$, where $L$ is the length of the edge $AB$ opposite to the side $OC$ (see Figure \ref{nh2}).  Finally, we denote the length of the side $BC$ by $\ell$. This quadrilateral is uniquely determined by the two values $\alpha$ and $d$. The trigonometric  formulae are the following:
\begin{eqnarray}
\label{e1}\frac{\cosh \ell}{\sin \alpha}  & = & \frac{\cosh d}{\sin \frac{\pi}{2}}   =  \frac{\sinh (L+h)}{\sinh L} \\
\label{e2}\sinh \ell & = &  \sinh d \cosh (L+h) - \cosh d \sinh (L + h) \cos \alpha \\
\label{e3}\sinh d & = & \sinh \ell  \cosh (L+h) - \cosh \ell \sinh (L + h)  \cos \frac{\pi}{2} \\
\label{e4} \cos \alpha & = & \sin \frac{\pi}{2} \sinh L \sinh \ell - \cos \frac{\pi}{2} \cosh L \\
\label{e5} \cos \frac{\pi}{2} & = & \sin \alpha \sinh L \sinh d - \cos \alpha \cosh L  \\
\label{e6} \cosh (L+h) & = & - \sinh \ell \sinh d + \cosh  \ell \cosh d  \cosh L \\
\label{e7} \cosh L & = & \sin \alpha \cosh (L+h)  
\end{eqnarray} 

In these formulae, the angle $\pi/2$ is the right angle at the vertex $C$.  In the book \cite{Fenchel}, the above formulae are given for a quadrilateral with two consecutive right angles, with the angle $C$ not necessarily $\pi/2$. We have rewritten them in the special case $C=\pi/2$. 

 \begin{figure}[!hbp] 
\centering
 \psfrag{L}{\small $L$}
  \psfrag{C}{\small $C$}
   \psfrag{B}{\small $B$}
  \psfrag{d}{\small $d$} 
 \psfrag{l}{\small $\ell$} 
  \psfrag{O}{\small $O$}
  \psfrag{P}{\small $P$} 
 \psfrag{a}{\small $\alpha$}
 \psfrag{M}{\small $M$}  
  \psfrag{h}{\small $h$}  
   \psfrag{A}{\small $A$} 
\includegraphics[width=0.45\linewidth]{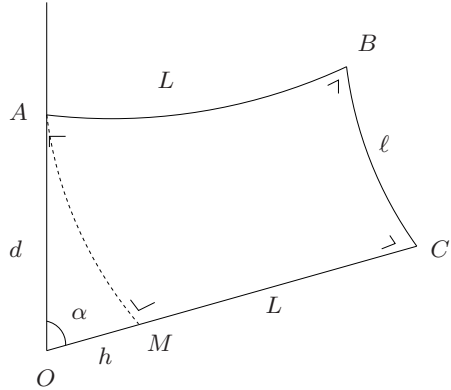}    
\caption{\small {The dashed line $AM$ is a piece of hypercycle parallel to the side $BC$. All the other lines are geodesics.}}   \label{nh2}  
\end{figure}

Now suppose that the value of the acute angle is $\pi - \alpha$ with $\alpha > \pi/2$.  The set of formulae then becomes
\begin{eqnarray*}
\frac{\cosh \ell}{\sin (\pi - \alpha)}  & = & \frac{\cosh d}{\sin \frac{\pi}{2}}   =  \frac{\sinh (L+h)}{\sinh L} \\
\sinh \ell & = &  \sinh d \cosh (L+h) - \cosh d \sinh (L + h) \cos (\pi -\alpha) \\
\sinh d & = & \sinh \ell  \cosh (L+h) - \cosh \ell \sinh (L + h) \cos \frac{\pi}{2} \\
\cos (\pi - \alpha) & = & \sin \frac{\pi}{2} \sinh L \sinh \ell - \cos \frac{\pi}{2} \cosh L \\
\cos \frac{\pi}{2} & = & \sin (\pi -\alpha) \sinh L \sinh d - \cos (\pi - \alpha) \cosh L  \\
\cosh (L+h) & = & - \sinh \ell \sinh d + \cosh  \ell \cosh d  \cosh L \\
\cosh L & = & \sin (\pi - \alpha) \cosh (L+h).  
\end{eqnarray*} 
By noting that $\cos(\pi - \alpha) = - \cos \alpha$, $\sin (\pi - \alpha) = \sin  \alpha$, $\cosh (-x) = \cosh x$ and $\sinh(-x) = - \sinh x$, we can rewrite these formulae as
\begin{eqnarray}
\label{e1.1}\frac{\cosh \ell}{\sin \alpha}  & = & \frac{\cosh d}{\sin \frac{\pi}{2}}   =  \frac{\sinh [-(L+h)]}{\sinh (-L)} \\
\label{e2.1}\sinh \ell & = &  \sinh d \cosh [-(L+h)] - \cosh d \sinh [-(L + h)] \cos \alpha \\
\label{e3.1}\sinh d & = & \sinh \ell  \cosh [-(L+h)] - \cosh \ell \sinh [-(L + h)]  \cos \frac{\pi}{2} \\
\label{e4.1} \cos \alpha & = & \sin \frac{\pi}{2} \sinh (-L) \sinh \ell - \cos \frac{\pi}{2} \cosh (-L) \\
\label{e5.1} \cos \frac{\pi}{2} & = & \sin \alpha \sinh (-L) \sinh d - \cos \alpha \cosh (-L)  \\
\label{e6.1} \cosh [-(L+h)] & = & - \sinh \ell \sinh d + \cosh  \ell \cosh d  \cosh (-L) \\
\label{e7.1} \cosh (-L) & = & \sin \alpha \cosh [-(L+h)].  
\end{eqnarray} 
Formally, in comparing them with equations (\ref{e1}, \ref{e2}, \ref{e3}, \ref{e4}, \ref{e5}, \ref{e6}, \ref{e7}),  these formulae describe the shape of a quadrilateral with three right angles and  one obtuse 
angle $\alpha > \pi/2$, and negative lengths $-L$ and $-h$, even though there is no such hyperbolic 
quadrilateral since the non-right-angle of a trirectangular quadrilateral in the hyperbolic plane is always acute, and there are no {\it negative} side lengths. These algebraic expressions, however, can be interpreted as follows.
     
Since the isometry type of a trirectangular quadrilateral is uniquely determined  (up to orientation) by $0< \alpha < \pi/2$ and the side length $d>0$, 
 consider  for a fixed $d$ the family of quadrilaterals obtained by increasing $\alpha$.   When $\alpha$ approaches $\pi/2$, 
 the side $BC$ of length $\ell$ converges to the side $OA$, and the sides $AB$ and $OC$ collapse to the points $A$ and $O$ respectively. Thus, the quadrilateral becomes degenerate and we have $\ell= d$ and $L = h =0$ in  Equations (\ref{e1}), (\ref{e2}) and (\ref{e3}).     

For $\alpha > \pi/2$ with the same $d>0$, we identify the situation given by Equations  (\ref{e1.1}, \ref{e2.1}, \ref{e3.1}, \ref{e4.1}, \ref{e5.1}, \ref{e6.1}, \ref{e7.1}) with a quadrilateral with vertex angle $\pi - \alpha$ appearing on the other side of $OA$,  the  image  of the corresponding quadrilateral with the angle $\alpha - \pi/2$ across the side $OA$.   The negative lengths $-L$ and $-h$ refer to the reflective symmetry across the side $OA$.  The situation is represented in Figure \ref{New}.

In this sense, the set of equations (\ref{e1}, \ref{e2}, \ref{e3}, \ref{e4}, \ref{e5}, \ref{e6}, \ref{e7}) describes the moduli of quadrilaterals with a fixed $d>0$ and variable vertex angle $\alpha$, where $\alpha$ varies 
in $[0, \pi]$, where the corresponding quadrilaterals are possibly degenerate, and with the opposite orientations with respect to the symmetry across the side $OA$.  
\begin{figure}[htbp]
\centering
 \psfrag{pi-}{\small $\pi-\alpha$}
  \psfrag{O}{\small $O$}
    \psfrag{A}{\small $A$}
   \psfrag{2a}{\small $2\alpha$}
  \psfrag{h}{\small $h$} 
 \psfrag{-h}{\small $-h$} 
 \psfrag{d}{\small $d$}
  \psfrag{-L}{\small $-L$}
   \psfrag{L}{\small $L$}
  \includegraphics[width=7cm]{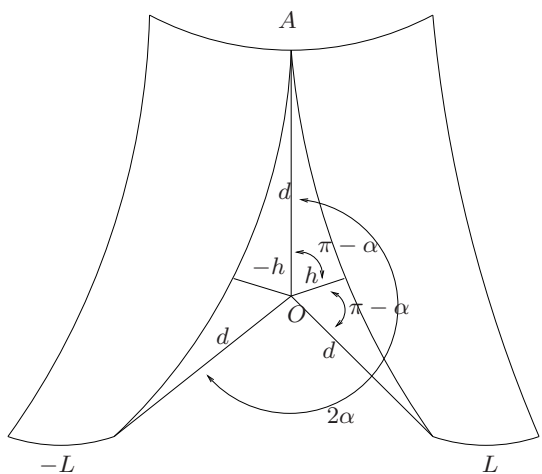}
\caption{\small{The trigonometric formulae (\ref{e1.1})--(\ref{e7.1}) refer to the four right-angled pentagon on the left hand side. In this pentagon, the value of the angle at $O$ is $2(\pi-\alpha)$, corresponding to the value $2\alpha$ for the exterior angle.}} \label{New}
\end{figure}

  %
%
%

\section{On the geometry of hexagons modeled on the Poincar\'e disc}

For a given trirectangular quadrilateral  $Q = OABC$ with three right angles at the vertices $A, B, C$ and one acute angle $\alpha$  at $O$, we consider the trirectangular  quadrilateral $\hat{Q}$ which is the mirror image of $Q$ by the
reflection through the straight edge  $OC$. We denote the corresponding vertices of $\hat{Q}$ by $\hat{A}, 
\hat{B}$, $\hat{O} = O$ and $\hat{C}=C$. The union of $Q$ and $\hat{Q}$ is a 
pentagon $P = OAB \hat{B} \hat{A}$ (Figure \ref{QQ}), where all the interior 
angles  are right, except at the vertex $O$ whose interior angle is $2 \alpha$. 

Now consider the situation where there are three pentagons $P_1, P_2$ and $P_3$ satisfying the 
compatibility 
condition $\alpha_1+\alpha_2+ \alpha_3 = \pi$ with $\alpha_i < \pi/2$ for each $i$, and $d_1=d_2=d_3$, a common value which we denote by $d$.  The three pentagons can 
be glued together via the identifications of the edges emanating from the origin $O$,
\begin{eqnarray*}
OA_1 & \simeq & O\hat{A}_2 \\
OA_2 & \simeq & O\hat{A}_3 \\
OA_3 & \simeq  &  O\hat{A}_1 \\
\end{eqnarray*}
to produce a 
right-angled hexagon $H$.  This is the Type I case described before, and it is depicted in Figure \ref{nh1}.  

The condition $\alpha_i < \pi/2$ is violated when $\alpha_1 = \pi/2$ and thus $\alpha_2 + \alpha_3 = \pi/2$.  This is the case when $P_1$ degenerates to the line segment $A_2O \cup OA_3$, and the hexagon is just $P_2 \cup P_3$.  

Finally when $\alpha_1 > \pi/2$, then $\pi - \alpha_1 < \pi/2$, and as described in the previous section, the quadrilateral $Q_1$ with its vertex angle $\pi - \alpha$ appears on the other side of $OA_2$ and $\hat{Q}_1$ on the other side of $OA_3$. Consequently  the pentagon $P_1 = Q_1 \cup \hat{Q}_1$ overlaps with $P_2$ and $P_3$. This is the type III picture. Note that the compatibility condition among $\alpha_1, \alpha_2, \alpha_3$ is still intact since the vertex angle of $P_1$ at $O$ is the sum of those of $P_2$ and $P_3$:
\[
\pi-\alpha_1 = \alpha_2 + \alpha_3.
\]     
Also note that by using the original $\alpha_1 > \pi/2$ in the set of equations (\ref{e1}, \ref{e2}, \ref{e3}, \ref{e4}, \ref{e5}, \ref{e6}, \ref{e7}), the resulting negativity of $L_1$ can now be understood with the observation we made in the case of Type III, in which the triangle inequality among  $\lambda_1, \lambda_2$ and $\lambda_3$ is violated.

\begin{figure}[htbp]
\centering
\psfrag{A}{\small $A$}
\psfrag{B}{\small $B$}
\psfrag{C}{\small $C=\hat{C}$}
\psfrag{O}{\small $O$}
\psfrag{A1}{\small $\hat{A}$}
\psfrag{B1}{\small $\hat{B}$}
\psfrag{Q}{\small $Q$}
\psfrag{Q1}{\small $\hat{Q}$}
\psfrag{l}{\small $\ell$}
\psfrag{d}{\small $d$}
\psfrag{L}{\small $L$}
\psfrag{L1}{\small $L+h$}
\psfrag{a}{\small $\alpha$}
\includegraphics[width=8cm]{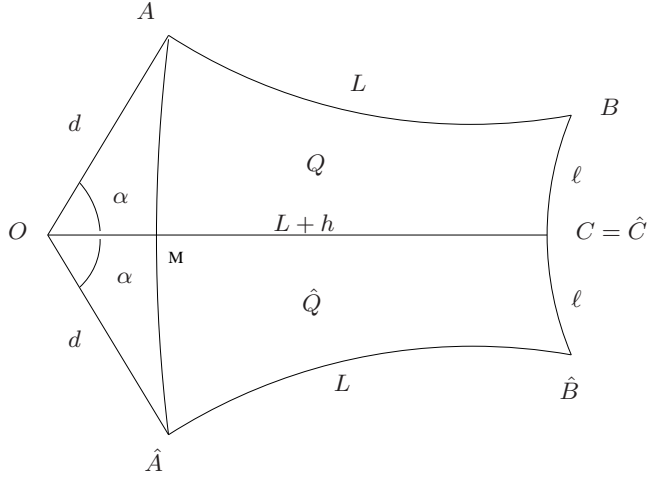}
\caption{\small{The pentagon is a union of two trirectangular quadrilaterals $Q$ and $\hat{Q}$. In this figure, the lines $AM$ and $\hat{A}M$ are hypercycles. All the other lines are geodesics.}} \label{QQ}
\end{figure}

We shall use the Poincar\'e unit disc model of the hyperbolic plane. We shall use at the same time the Euclidean and the hyperbolic geometry of this disc.

With the notation of Figure \ref{nh2}, with $s$ being the Euclidean distance from $O$ to $A$  in the Poincar\'e disc, and 
$t$ being the Euclidean distance  from the center $O$ to the point $M$, we can write $d$ and $h$ as functions of $s$ and $t$:
\[  d(s) =\log  \frac{1+s}{1-s}  \mbox{ and }
h(t) = \log  \frac{1+t}{1-t}.\]

\begin{proposition}
Using the above notation, we have the following formulae for the quadrilaterals $OABC$ and $BCMA$ in terms of the Euclidean distance parameters $s$ and $t$ (Figure \ref{QQ}): 
 \[
   \tanh L =  \frac{\cos \alpha}{\sin \alpha}\frac{1-s^2}{2s} \quad \mbox{ and  } \quad 
  t= \frac{\cos \alpha}{\sin \alpha +1}s
\]
\end{proposition}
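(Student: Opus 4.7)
The first formula follows almost immediately from the trigonometric identities collected in Section~\ref{s:hyp}. I would begin by rewriting equation~(\ref{e5}) as
\[
\sin\alpha\,\sinh L\,\sinh d \;=\; \cos\alpha\,\cosh L,
\]
which yields $\tanh L = \cos\alpha / (\sin\alpha\,\sinh d)$. A direct computation from $d(s) = \log\frac{1+s}{1-s}$ gives $\sinh d = 2s/(1-s^2)$, and substituting produces the claimed expression for $\tanh L$.

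For the second formula, my plan is to exploit equation~(\ref{e7}), namely $\cosh L = \sin\alpha\,\cosh(L+h)$. Expanding $\cosh(L+h) = \cosh L\cosh h + \sinh L\sinh h$ and rearranging gives
\[
\tanh L \;=\; \frac{1 - \sin\alpha\,\cosh h}{\sin\alpha\,\sinh h}.
\]
From $h(t) = \log\frac{1+t}{1-t}$, one has $\sinh h = 2t/(1-t^2)$ and $\cosh h = (1+t^2)/(1-t^2)$. Equating the two expressions for $\tanh L$ (the one above and the one obtained from the first formula) and clearing denominators reduces the problem to the quadratic equation
\[
s(1+\sin\alpha)\,t^2 \;+\; \cos\alpha\,(1-s^2)\,t \;-\; s(1-\sin\alpha) \;=\; 0.
\]

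The main obstacle, and really the only non-routine point, is recognizing that the discriminant of this quadratic factors as a perfect square:
\[
\cos^2\alpha\,(1-s^2)^2 + 4s^2(1-\sin^2\alpha) \;=\; \cos^2\alpha\,(1+s^2)^2,
\]
which uses $(1-\sin\alpha)(1+\sin\alpha)=\cos^2\alpha$ and $(1-s^2)^2+4s^2=(1+s^2)^2$. The positive root of the quadratic then simplifies to $t = s\cos\alpha/(1+\sin\alpha)$; the other root is negative and is discarded since $t$ is a non-negative Euclidean distance. A slicker variant is to guess the candidate $t=s\cos\alpha/(1+\sin\alpha)$ and verify it by direct substitution into the expression for $\tanh L$ above, which still relies on the same two algebraic identities to see the $\cos\alpha$ factors cancel correctly.
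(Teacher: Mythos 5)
Your argument is correct and follows essentially the same route as the paper: the first identity comes from equation~(\ref{e5}) together with $\sinh d = 2s/(1-s^2)$, and the second from combining equation~(\ref{e7}) (expanded via the addition formula) with the first identity to obtain exactly the quadratic relation in $t$ that the paper records as equation~(\ref{e8}). The only cosmetic difference is that the paper resolves that quadratic by exhibiting the factorization (\ref{FFFF}) and discarding the positive second factor, whereas you apply the quadratic formula and observe that the discriminant is the perfect square $\cos^2\alpha\,(1+s^2)^2$ before discarding the negative root — the same computation in a different guise.
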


\begin{proof}
We have the following relations:
\[
\cosh d = \frac{1+s^2}{1-s^2} \,\,\, \mbox{ and  } \,\,\, \sinh d = \frac{2s}{1-s^2} 
\]
and
\[
\cosh h = \frac{1+t^2}{1-t^2} \,\,\, \mbox{ and  } \,\,\, \sinh h = \frac{2t}{1-t^2}. 
\]

Equation (\ref{e7}) can be written as 
\[
\frac{\cosh (L+h)}{\cosh L} = \frac{1}{\sinh \alpha}
\]
By applying the angle-addition formula, the equality becomes
\[
\cosh h + \tanh L \sinh h = \frac{1}{\sin \alpha}.
\]
On the other hand, Equation (\ref{e5}) shows that
\[
\sinh d = \frac{1} {\tan \alpha \tanh L}
\]
which in turn says
\[
\tanh L = \frac{\cos \alpha}{\sin \alpha} \frac{1-s^2}{2s}.
\]
By combining these equations, we have the following relation satisfied by the two Euclidean parameters $s$ and $t$:
\begin{equation}\label{e8}
\frac{1+t^2}{1-t^2} + \frac{\cos \alpha}{\sin \alpha} \frac{1-s^2}{2s} \frac{2t}{1-t^2} = \frac{1}{\sin \alpha} 
\end{equation}

Factorizing in Equation (\ref{e8}),  we get 
\begin{equation}\label{FFFF}
\left(\frac{1+\sin \alpha}{\cos\alpha}t-s\right)\left(\frac{1+\sin\alpha}{\cos\alpha}ts+1\right)=0
\end{equation}
which in turn implies 
\begin{equation}\label{f:combining}
t = \frac{\cos \alpha}{1 +  \sin \alpha} s.
\end{equation}
We are using the fact that the second factor in \ref{FFFF} is always $>0$, since $t>0$, and $s\cos\alpha >0$.
\end{proof}

Formula (\ref{f:combining}), which states that for $\alpha$ constant, the two parameters $t$ and $s$ are linearly related, has the following geometric interpretation.  
Consider the arc of hypercycle which is the set of points at hyperbolic distance $L$ from the side $BC$. This arc starts at the vertex $A$ and  meets the side $OC$ at a 
point, which we call $M$, which is hyperbolic  distance $h$ from $O$.  The geometry of the Poincar\'e disc implies that the arc is a Euclidean circular arc, and we call the region surrounded by the line segment $OA$, the arc $AM$ and the side $MO$, the central region of the quadrilateral.  Note  when $\alpha \geq \pi/2$, then $s \leq 0$, indicating that the central region appears on the other side of $OA$ compared to the case $\alpha <\pi/2$.

As the moduli of convex quadrilaterals with three right angles has two parameters $\alpha$ and $d$, for a fixed angle $\alpha$, we have a one-parameter family of right-angled quadrilaterals, with isometry type determined by $d$, or alternatively, by $s$.

The linear relation (\ref{f:combining}) between the values $t$ and $s$ 
says the following:
\begin{proposition}\label{central}
For a fixed angle $\alpha$, as the value of $d$ varies, the central region of the quadrilateral  changes its shape via Euclidean homotheties, centered at the origin $O$, where the scaling is given by the value $s>0$ with $d = \log \frac{1+s}{1-s}$.    
\end{proposition}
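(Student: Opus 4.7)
The plan is to interpret the linear relation $t = \frac{\cos\alpha}{1+\sin\alpha}\,s$ of Formula (\ref{f:combining}) geometrically in the Poincar\'e disc centered at $O$. Since $\alpha$ is fixed, the two sides $OA$ and $OC$ of the quadrilateral lie along Euclidean rays from $O$ which are independent of $s$. The vertex $A$ then lies at Euclidean distance $s$ along the first ray, and the endpoint $M$ of the hypercycle arc on $OC$ lies at Euclidean distance $\frac{\cos\alpha}{1+\sin\alpha}\,s$ along the second ray. Both Euclidean distances depend linearly on $s$, so the Euclidean homothety centered at $O$ with ratio $s_2/s_1$ carries $\{O, A(s_1), M(s_1)\}$ onto $\{O, A(s_2), M(s_2)\}$, and therefore carries the two radial boundary segments $OA$ and $MO$ of the central region of one quadrilateral onto those of the other.

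The remaining task is to check that the curved boundary, the hypercycle arc from $A$ to $M$, also scales homothetically under the same map. Here I would use the fact that in the Poincar\'e disc the hypercycle is a Euclidean circular arc meeting $\partial \mathbb{D}$ at the two ideal endpoints of its axis (the geodesic through $B$ and $C$), and making a constant angle with $\partial \mathbb{D}$ that depends only on the hyperbolic distance $L$ from the axis. Using the trigonometric formulae of Section \ref{s:hyp}, in particular the dependence of $L$ on $s$ via $\tanh L = \frac{\cos\alpha}{\sin\alpha}\cdot\frac{1-s^2}{2s}$, I would compute explicitly the Euclidean center and radius of the circle carrying the arc as functions of $s$ (with $\alpha$ held constant) and verify that the center lies on a fixed ray from $O$, with both the center-to-origin Euclidean distance and the radius linear in $s$. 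This suffices to conclude that the arc itself is the image of a single reference arc under the Euclidean homothety centered at $O$.

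The hard part is precisely this last verification for the hypercycle arc, because the axis itself moves with $s$, so one cannot directly reuse the reasoning for the radial sides. A cleaner route is to parametrize points of the arc by the foot of perpendicular to the axis and track their Euclidean positions: the combination of the linear scaling of both $s$ and $t$ with the fixed angle $\alpha$ pins down each boundary point of the arc to move along a fixed ray from $O$ with Euclidean radial coordinate depending linearly on $s$. Together with the scaling of the two radial sides this gives the full homothety statement, and the scaling ratio is then the Euclidean parameter $s$ itself, as claimed.
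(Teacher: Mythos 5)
Your strategy is essentially the paper's own: the proposition is presented there as a direct consequence of the linear relation (\ref{f:combining}), the only justification being the preceding observation that $A$ and $M$ lie on fixed rays from $O$ (since $\alpha$ is fixed) at Euclidean distances $s$ and $t=\frac{\cos\alpha}{1+\sin\alpha}\,s$. You are in fact more scrupulous than the paper, because you correctly isolate the one point this observation does not by itself settle: that the hypercycle arc $AM$, and not merely its two endpoints, is carried to the corresponding arc by the homothety. Your proposed way of closing that step --- computing the Euclidean center and radius of the supporting circle as functions of $s$ and checking linearity --- would work, but you leave it as a plan, and it is heavier than necessary. A cleaner finish: in the conformal disc model the sides $OA$ and $OC$ lie on fixed Euclidean diameters; the arc $AM$ is a Euclidean circular arc that is \emph{tangent} to the diameter through $A$ at the point $A$ (both the hypercycle and the geodesic $OA$ are orthogonal to $AB$ at $A$, by the right angles of the quadrilateral at $A$ and at $B$) and \emph{orthogonal} to the diameter through $M$ at the point $M$ (since $MC\subset OC$ is perpendicular to the axis $BC$ at $C$). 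A circle through $M(s)$ tangent to a fixed line at $A(s)$ is unique, and the homothety of ratio $s'/s$ centered at $O$ carries this determining data for $s$ to the data for $s'$; hence it carries arc to arc, and therefore central region to central region. With that addition your argument is complete and, unlike the paper's, fully justifies the statement for the curved part of the boundary.
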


Now we come back to the hexagonal setting where three pentagons, or equivalently, six quadrilaterals, are   combined.  Type I hexagon is represented in Figure \ref{nh1}.

\begin{figure}[htbp]

\centering
 \psfrag{L1}{\small $L_1$}
  \psfrag{L2}{\small $L_2$}
   \psfrag{L3}{\small $L_3$}
      \psfrag{s}{\small $s$}
  \psfrag{p1}{\small $\alpha_1$} 
 \psfrag{p2}{\small $\alpha_2$} 
  \psfrag{p3}{\small $\alpha_3$}
  \psfrag{a1}{\small $\ell_1$} 
 \psfrag{a2}{\small $\ell_2$}
 \psfrag{a3}{\small $\ell_3$}  
  \psfrag{long}{\small long} 
  \psfrag{short}{\small short}
\includegraphics[width=6cm]{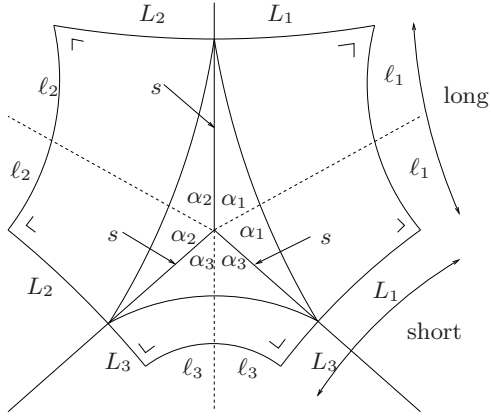}
\caption{The central region has one  $s$, which is the common Euclidean length of the three branches of the tripod $T$.} \label{nh1}
\end{figure}

 The hyperbolic lengths $\ell_i, L_i \,\,\, (i =1,2,3)$ are all functions of $d$ only, {\it provided the angle moduli $(\alpha_1, \alpha_2, \alpha_3)$ are fixed.}

  Equation (\ref{e1}) says that 
\[
\cosh \ell_i = \sin \alpha_i \cosh d. 
\]
By taking the ratio of the equality for $i$ and $j$, we have 
\[
\frac{ \cosh \ell_i}{\cosh \ell_j} = \frac{\sin \alpha_i}{\sin \alpha_j},
\]
a quantity independent of $d$.  Hence  for different values of $d$ and $d'$,
\[
\frac{\cosh \ell_i(d')}{\cosh \ell_j(d')} = \frac{\cosh \ell_i (d)}{\cosh \ell_j(d)}
\]
for $1 \leq i, j \leq 3$.

This equality implies the following:

\begin{proposition} Suppose the angles $\alpha_1, \alpha_2, \alpha_3$ are fixed.  
As the value of $d$ varies, the ratios among $\cosh \ell_1(d)$, $\cosh \ell_2(d)$ and $\cosh \ell_3(d)$ of the right-angled hexagons remain  invariant. \end{proposition}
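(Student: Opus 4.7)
The plan is to reduce the claim to an immediate consequence of Equation (\ref{e1}) applied separately to each of the three pentagons $P_1, P_2, P_3$ composing the hexagon. Recall that in the decomposition of a Type I hexagon, the three pentagons $P_i$ share the common tripod length $d$, so the three trirectangular quadrilaterals $Q_i$ that make up these pentagons (each pentagon being a union of two mirror-image copies) all have the same value of $d$ as one of their two determining parameters; the other parameter is the half-angle $\alpha_i$ at $O$. Equation (\ref{e1}) applied to the quadrilateral in $P_i$ yields
\[
\cosh \ell_i = \sin \alpha_i \, \cosh d,
\]
which isolates the dependence on $d$ into the single scalar factor $\cosh d$.

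Once this identity is in hand, the proof is immediate: for any two indices $i, j \in \{1,2,3\}$, I divide the two identities and the common factor $\cosh d$ cancels, leaving
\[
\frac{\cosh \ell_i(d)}{\cosh \ell_j(d)} = \frac{\sin \alpha_i}{\sin \alpha_j}.
\]
Since $\alpha_1, \alpha_2, \alpha_3$ are assumed fixed, the right-hand side does not depend on $d$. Comparing the values at two parameters $d$ and $d'$ gives
\[
\frac{\cosh \ell_i(d')}{\cosh \ell_j(d')} = \frac{\cosh \ell_i(d)}{\cosh \ell_j(d)},
\]
which is exactly the asserted invariance of the ratios.

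The only point requiring any care is to justify that Equation (\ref{e1}), which was derived for a trirectangular quadrilateral with acute vertex angle, applies uniformly for $i = 1,2,3$ across all three Types of hexagons. For Type I this is direct. For Type II with, say, $\alpha_1 = \pi/2$, one has $\cosh \ell_1 = \cosh d$ and the pentagon $P_1$ degenerates, but the identity still holds with the convention $\sin(\pi/2) = 1$. For Type III, where $\alpha_1 > \pi/2$, the quadrilateral interpretation across the relevant edge — explained earlier in the discussion of Equations (\ref{e1.1})--(\ref{e7.1}) and the reflective symmetry convention — shows that the formula $\cosh \ell_i = \sin \alpha_i \cosh d$ remains formally valid (using $\sin \alpha = \sin(\pi - \alpha)$). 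Thus no genuine obstacle arises: the statement is essentially an algebraic corollary of the trigonometric formula (\ref{e1}) together with the geometric observation that $d$ is the single shared parameter along which the hexagon is being deformed when the angles are held fixed.
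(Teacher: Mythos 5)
Your proof is correct and follows exactly the paper's argument: apply Equation (\ref{e1}) to each pentagon to get $\cosh \ell_i = \sin \alpha_i \cosh d$, then take ratios so that the common factor $\cosh d$ cancels. The extra paragraph checking that the identity persists for Type II and Type III hexagons is a reasonable addition, but the core of the argument is the same as in the paper.
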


In other words, the ratios of weighted lengths (where taking the weighted length means replacing it by the hyperbolic cosine of that length) are invariant.

 We concentrate on the deformations such that the long edges $s_i$ get longer, or, equivalently, the scale of the central region gets larger. We can call  these deformations {\it forward} deformations.  The forward deformation of $H$  makes {\it long edges longer and short edges shorter.} We recall that the angles $\alpha_i$ are constant for the forward deformations. In the limit of such a deformation, as $L_i \rightarrow 0$, one obtains ideal triangles. 
 
 Let us note by the way that unlike the deformation studied in the present paper, the deformations studied  by Thurston in the paper \cite{Thurston} and by Papadopoulos-Th\'eret in the paper \cite{PT2} shrink the horocyclic/hypercyclic foliations and dilate the geodesic foliations.

The convergence $L_i \rightarrow 0$ is studied using the formula
\[
\tanh L = \frac{\cos \alpha}{\sin \alpha} \frac{1-s^2}{2s}
\]  
regardless of the cases $\alpha < \pi/2$ or $\alpha \geq \pi/2$.
Note that
$L\rightarrow 0$ and $s \rightarrow  1$
are equivalent, and that $s \rightarrow 1$ means that the long edge is being pushed to the geometric boundary of the Poincar\'e disc. Hence the one-parameter family of forward deformations provides a canonical path for the 
right-angled hexagon asymptotically converging to an ideal triangle.  Also, in terms of elongating the long side by a factor $K$ (here $K = K(d)$) we can see from an equality obtained from the trigonometric formulae (\ref{e1}) and (\ref{e5}), namely, 
\[
\tanh^2 L(K) = \frac{\cos^2 \alpha}{\cosh^2 (K \ell) - \sin^2 \alpha},
\]
that $L(K)$ approaches zero as $K\to \infty$ along the forward deformation.

Let us consider the special ``symmetric" case where $\alpha = \pi / 3$, as this is the case treated in detail in \cite{PT2}.  Then six copies of the trirectangular quadrilateral (up to orientation) placed together form a right-angled hexagon with ${\mathbb Z}_3$-rotational symmetry, which has side lengths $2L$ and $2 \ell$ appearing alternately.  

Inserting the values $\alpha = \pi/3$ and $\ell = 0$ in the formula (\ref{e1}),  we obtain
\[
\frac{1}{\sqrt{3}/2} = \frac{\cosh d}{1}.
\]
Since $\cosh d = 2s/(1-s^2)$, we get $s = 2 - \sqrt{3}$.  This value of $s$ is 
obtained by letting $\ell$ approach zero, or equivalently by letting the right-angled hexagon converge to an ideal triangle via ``backward" deformations.  The value of $t$ then is $(2-\sqrt{3})^2$.   When 
deformed in the forward direction, the hexagon also converges to the ideal triangle, this time with $t = 
2 - \sqrt{3}$  and $s= (2-\sqrt{3})^2$.  Note that the forward deformation is defined for all $K \geq 1$, 
while in general the backward deformation is not.

%
%
%
%
%
%
%
%
%
%
%
%
%

 \section{A geometric lemma}
Let us recall a few elements from \cite{PT2}.

A map $f$ between two metric spaces is said to be \textsl{contracting} if $\hbox{Lip}(f)<1$, and \textsl{weakly} contracting if $\hbox{Lip}(f)\leq1$

If $f$ is a self-map of class $C^{0,1}$ of a convex domain $\Omega$ of the hyperbolic plane, then one can compute the norm of its differential at each point $x$ of $\Omega$,  

$$ ||(df)_{x}||=\sup_{V\in T_{x}\Omega\setminus\{0\}}\frac{||(df)_{x} (V)||}{||V||}.$$
Setting 
\[
||df||=\sup_{x\in\Omega}||(df)_{x}||,
\]
we have the following

\begin{proposition} The quantity  $\Vert df \Vert$ provides an upper bound for the global Lipschitz constant. In other words, we have
\[
\hbox{Lip}(f)\leq \Vert df \Vert.
\]
\end{proposition}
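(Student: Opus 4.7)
The plan is to use the convexity of $\Omega$ to connect any two points by a geodesic segment, and then control the length of the image under $f$ by integrating the norm of the differential along this segment. Since $f$ is only of class $C^{0,1}$, the differential exists almost everywhere by Rademacher's theorem, and we need the fundamental theorem of calculus for absolutely continuous functions to make the integration rigorous.

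First, I would fix two distinct points $x,y \in \Omega$. Because $\Omega$ is a convex subset of the hyperbolic plane, there is a unique hyperbolic geodesic segment $\gamma:[0,L]\to \Omega$, parametrized by arc length, joining $\gamma(0)=x$ to $\gamma(L)=y$, where $L=d_{\mathbb H^2}(x,y)$ and $\|\gamma'(t)\|=1$ for all $t$. Next, I would consider the composition $\sigma = f\circ\gamma:[0,L]\to\Omega$. Since $f$ is Lipschitz with some (possibly infinite) Lipschitz constant and $\gamma$ is smooth with unit-speed, the composition $\sigma$ is absolutely continuous: for all $s,t$, one has $d(\sigma(s),\sigma(t))\le \hbox{Lip}(f)|s-t|$, and if $\|df\|$ is finite this already gives finite local Lipschitz bounds.

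Then I would bound the length of $\sigma$. By Rademacher's theorem applied in local charts, $f$ is differentiable at almost every point of $\Omega$, and along the geodesic $\gamma$, the derivative $\sigma'(t)=(df)_{\gamma(t)}(\gamma'(t))$ exists for almost every $t\in[0,L]$. Using the definition of $\|(df)_x\|$ together with $\|\gamma'(t)\|=1$, we obtain the pointwise almost everywhere estimate
\[
\|\sigma'(t)\|\le \|(df)_{\gamma(t)}\|\cdot \|\gamma'(t)\| \le \|df\|.
\]
Integrating and using the fact that the hyperbolic distance between the endpoints is bounded above by the length of any connecting curve gives
\[
d\bigl(f(x),f(y)\bigr)\le \operatorname{length}(\sigma)=\int_0^L \|\sigma'(t)\|\,dt \le \|df\|\cdot L = \|df\|\cdot d(x,y).
\]
Dividing by $d(x,y)$ and taking the supremum over $x\neq y$ yields $\hbox{Lip}(f)\le \|df\|$.

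The main subtle point, which is the only real obstacle, is the justification of the length formula $\operatorname{length}(\sigma)=\int_0^L\|\sigma'(t)\|\,dt$ for the merely $C^{0,1}$ curve $\sigma$. One handles this by noting that $\sigma$ is absolutely continuous (as the composition of a Lipschitz map with a smooth unit-speed curve lying in a convex domain), so the classical length-integral identity for absolutely continuous curves in a Riemannian manifold applies. If one prefers to avoid any measure-theoretic machinery, an alternative is to partition $[0,L]$ into $N$ equal subintervals, apply the inequality $d(\sigma(t_k),\sigma(t_{k+1}))\le (\sup\|df\|)\,|t_{k+1}-t_k|$ obtained from a local argument on a small geodesic subsegment (where we may approximate $f$ by smoothing), sum, and pass to the limit; the triangle inequality in the hyperbolic metric then produces the same estimate without invoking Rademacher directly.
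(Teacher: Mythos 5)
The paper states this proposition without proof, so there is nothing of the authors' to compare your argument against; your strategy --- join $x$ to $y$ by the geodesic segment that convexity provides, bound the length of the image curve by integrating $\Vert df\Vert$, and take suprema --- is the standard and correct route, and in the situation where the proposition is actually applied later in the paper (the maps $f_k$ are piecewise smooth on explicitly described regions) it goes through with no measure-theoretic issues at all.

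One step is not justified as you state it, and it is not the step you flag. Rademacher's theorem gives differentiability of $f$ at almost every point of the \emph{two-dimensional} domain $\Omega$, but the exceptional set $N$ is null only for two-dimensional measure, and the fixed geodesic segment $\gamma([0,L])$ is itself two-dimensionally null; nothing prevents $\gamma$ from lying entirely inside $N$. Hence the identity $\sigma'(t)=(df)_{\gamma(t)}(\gamma'(t))$ for almost every $t$ does not follow from Rademacher for the particular geodesic joining your chosen pair of points: the chain rule is available only at parameters $t$ where $f$ is differentiable at $\gamma(t)$, and that set could a priori be null in $[0,L]$ even though $\sigma$ itself, being Lipschitz, is differentiable almost everywhere. (By contrast, the point you single out as ``the only real obstacle,'' the length identity for the absolutely continuous curve $\sigma$, is unproblematic once $\sigma$ is known to be Lipschitz.) The standard repairs are exactly in the spirit of your closing remark: either run a Fubini argument showing that for almost every geodesic in a one-parameter family sweeping out a neighborhood of $[x,y]$ the set $N$ meets that geodesic in a one-dimensionally null set, deduce $d(f(x'),f(y'))\le \Vert df\Vert\, d(x',y')$ for a dense set of pairs $(x',y')$, and conclude for all pairs by continuity of $f$; or mollify $f$, apply your argument to the smooth approximations (whose differentials are controlled by $\Vert df\Vert$ because the distributional and pointwise a.e.\ derivatives of a Lipschitz map coincide), and pass to the limit. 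With either patch your proof is complete.
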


In what follows, we  consider maps between hyperbolic surfaces. The following statement will be used later 
in relating two hyperbolic surfaces.

\begin{lemma} \label{lip.const}
Suppose there exist two pairs of orthogonal 
foliations $(F_1,G_1)$ and $(F_2,G_2)$ which are preserved by $f$ (that is, $f$ sends any leaf of 
$F_1$ onto a leaf of $F_2$ and any leaf of $G_1$ onto a leaf of $G_2$).  If $f$ is 
$K$-Lipschitz along the leaves of $F_1$  and contracting  along the leaves of  $G_1$, then  $f$ is $K$-Lipschitz (that is, $\hbox{Lip}(f) \leq K$). Furthermore, if the map $f$ stretches the leaves of $F_1$ by  a constant factor $K$ and if there exists 
an open geodesic segment contained in a leaf of $F_1$ which is sent to a geodesic segment 
contained in a leaf of $F_2$ (with length expanded by $K$), $K$ is the best Lipschitz constant for $f$.    
\end{lemma}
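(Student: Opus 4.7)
The plan is to bound the operator norm $\|df_x\|$ pointwise by $K$ and then invoke the preceding proposition. The sharpness claim is handled separately by exhibiting an explicit pair of points realizing the ratio $K$.

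At any point $x$ where $f$ is differentiable, the orthogonality of $F_1$ and $G_1$ gives an orthogonal splitting $T_x = T_x F_1 \oplus T_x G_1$, and since $f$ sends $F_1$-leaves into $F_2$-leaves and $G_1$-leaves into $G_2$-leaves, the differential $(df)_x$ maps $T_x F_1$ into $T_{f(x)} F_2$ and $T_x G_1$ into $T_{f(x)} G_2$. The hypothesis that $F_2, G_2$ are also orthogonal is what makes the argument work: for any unit tangent vector $V = V_F + V_G$ with $V_F \in T_x F_1$ and $V_G \in T_x G_1$, the images $(df)_x(V_F)$ and $(df)_x(V_G)$ lie in orthogonal subspaces, so by the Pythagorean theorem
\[
|(df)_x V|^2 = |(df)_x V_F|^2 + |(df)_x V_G|^2.
\]
The leafwise $K$-Lipschitz assumption on $F_1$-leaves yields $|(df)_x V_F| \leq K |V_F|$ (by restricting $f$ to the $F_1$-leaf through $x$ and using its induced arclength), and contraction along $G_1$-leaves yields $|(df)_x V_G| \leq |V_G|$. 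Hence $|(df)_x V|^2 \leq K^2 |V_F|^2 + |V_G|^2 \leq K^2(|V_F|^2 + |V_G|^2) = K^2$, so $\|df_x\| \leq K$. Taking the supremum over $x$ and applying the preceding proposition, one obtains $\mathrm{Lip}(f) \leq K$.

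For the sharpness statement, suppose $f$ stretches the leaves of $F_1$ by the exact factor $K$, and let $\sigma$ be an open geodesic segment contained in some leaf of $F_1$ whose image $f(\sigma)$ is a geodesic segment in a leaf of $F_2$. For two distinct points $x, y \in \sigma$, the ambient hyperbolic distance $d(x,y)$ coincides with the length of the subsegment of $\sigma$ between $x$ and $y$, since $\sigma$ is a geodesic arc of the ambient surface; likewise, $d(f(x), f(y))$ equals the length of the subsegment of $f(\sigma)$ between $f(x)$ and $f(y)$. The leafwise stretch factor $K$ then gives
\[
\frac{d(f(x), f(y))}{d(x,y)} = K,
\]
so $\mathrm{Lip}(f) \geq K$, and combined with the upper bound this yields $\mathrm{Lip}(f) = K$.

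The main obstacle is the passage from the leafwise Lipschitz hypothesis to a pointwise bound on the derivative in the leaf-direction; it requires enough regularity of $f$ and of the foliations for the splitting argument and the preceding proposition to apply. In the intended applications the foliations are either by geodesics or by hypercycles and the maps between hexagons will be smooth on the interiors of the foliated regions, so this regularity is automatic. A secondary point to verify is that the open geodesic segment $\sigma$ realizing the stretch factor actually exists in the coordinate system of Section \ref{s:geo}; this will be used when assembling concrete maps between hexagons, where the bisecting leaves $G_i(1)$ of the pentagons provide natural candidates.
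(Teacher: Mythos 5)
Your proof is correct and follows essentially the same route as the paper: both arguments reduce the global Lipschitz bound to a pointwise bound on $\|df\|$ via the orthogonal splitting of the tangent space induced by the two foliations and the Pythagorean identity, then invoke the preceding proposition $\mathrm{Lip}(f)\leq\|df\|$. The only difference is cosmetic (you bound by $K^2(|V_F|^2+|V_G|^2)$ where the paper takes the maximum of the two partial derivatives), and you spell out the sharpness step via the stretched geodesic segment, which the paper leaves implicit.
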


\begin{proof}
The proof follows from an argument in \cite{PT2} (p. 65-66) which we reproduce here.  Consider the 
coordinate system $(\alpha, \beta)$ defined by the orthogonal grid formed by the leaves of $F_1$ and 
$G_1$, where $\alpha$ and $\beta$ are suitably chosen so that  the coordinate vector fields $
(\frac{\partial}{\partial \alpha}, \frac{\partial}{\partial \beta} )$ are an orthonormal basis for each tangent 
plane, and are linearly sent to an orthogonal basis by the differential $df$.   

If $f=(f_\alpha, f_\beta)$, then 
\[
df_{(\alpha, \beta)} = \frac{\partial f_\alpha}{\partial \alpha} d \alpha + \frac{\partial f_\beta}{\partial \beta} d \beta.
\]
Let $V = V_\alpha \frac{\partial }{\partial \alpha} +  V_\beta \frac{\partial }{\partial \beta}$ a vector field
with $\|V\|=1$.  The norm of the differential $\|df\|$ is computed as follows.  First, we have 
\begin{eqnarray*}
\|df_{(\alpha, \beta)} (V) \|^2  & = &  \Big\| \frac{\partial f_\alpha}{\partial \alpha} V_\alpha \frac{\partial}{\partial \alpha} + \frac{\partial f_\beta}{\partial \beta} V_\beta \frac{\partial}{\partial \beta} \Big\|^2 \\
 & = &  \Big( \frac{\partial f_\alpha}{\partial \alpha} V_\alpha \Big)^2 +  \Big(\frac{\partial f_\beta}{\partial \beta} V_\beta \Big)^2. 
 \end{eqnarray*}
From this, it follows that 
\[
\|df_{(\alpha, \beta)} (V) \| \leq \max_\Omega  \Big\{  \Big|  \frac{\partial f_\alpha}{\partial \alpha}  \Big|,  \Big|  \frac{\partial f_\beta}{\partial \beta} \Big| \Big\} \|V\|
\]
which in turn gives 
\[
\|df\|  \leq \max_\Omega  \Big\{  \Big|  \frac{\partial f_\alpha}{\partial \alpha}  \Big|,  \Big|  \frac{\partial f_\beta}{\partial \beta} \Big| \Big\}.
\]
Now since the map $f$ is $K$-Lipschitz along the leaves of $F_1$, we have $\Big| \frac{\partial f_\alpha}
{\partial \alpha} \Big| \leq  K $  at any point of $\Omega$.  On the other hand  since the map $f$ is contracting 
along the leaves of $G_1$,   we have $\Big| \frac{\partial f_\beta}
{\partial \beta} \Big| < 1 $ at any point of $\Omega$.  Thus we have   $\hbox{Lip}(f) \leq \|df\| \leq K$.  

\end{proof}

\section{Deforming right-angled hexagons}

Given three non-negative numbers $(\ell_1,\ell_2,  \ell_3)$, let $H$ be a right-angled hexagon in the hyperbolic plane with pairwise non-consecutive side lengths $(2\ell_1,2\ell_2,2\ell_3)$.  Such a hexagon is uniquely determined up to isometry by these three sides.  Let us now consider these sides to be the {\it long} edges of $H$. We denote the lengths of  
the diametrically facing three pairwise non-consecutive sides (the {\it short} edges) by $\lambda_1, \lambda_2,\lambda_3$.
If some $\ell_i$ is equal to $0$, we consider that the corresponding side is at infinity. For  any $K>1$, we denote by
$H_K$  the right-angled hexagon with long edges of lengths $(2  \ell_1 (K),2  \ell_2 (K),2 \ell_3(K))$, with $\ell_i(K)$, for $i=1,2,3$, determined by
\[K=\frac{\cosh \ell_i(K)}{\cosh \ell_i}.
\]

For $i=1,2,3$, we define 
\begin{equation}\label{eq:k_i}
k_i=\frac{\ell_i(K)}{\ell_i}
\end{equation}
and we set
\[k=\max_i\{k_i\}.
\]

Note that $k$ is determined by $H$ and $K$. We also note that from its definition, $k$ is an increasing function of $K$ and that when $K$ varies from $1$ to $\infty$, so does $k$.

In the rest of this section, we construct a $k$-Lipschitz map $f_k: H \to H_K$ which 
 is Lipschitz extremal in its homotopy class in the sense that the Lipschitz constant of any other map between the two given right-angled hexagons sending each edge of $H$ to the corresponding edge of $H_K$ is at least equal to $k$. Making $K$ (or, equivalently, $k$) vary from 1 to $\infty$, we obtain a family of marked right-angled hexagons which is a geodesic for the Lipschitz metric on the Teichm\"uller space of right-angled hyperbolic hexagons -- a natural analogue of the metric defined above in (\ref{Lip}). By gluing hexagons along their sides, we obtain families of geodesics for the  arc metric for the Teichm\"uller space of surfaces with nonempty boundary (and with variable lengths of the boundary components). Gluing surfaces with boundary along their boundary components and  assembling the maps between them, we obtain geodesics for the Thurston metric on the Teichm\"uller space of surfaces without boundary (possibly with punctures).

The maps $f_k$ that we construct from $H$ to $H_K$ preserve the two pairs of orthogonal partial  foliations  $(F,G)$ 
and   $(F^K,G^K)$ of  $H$ and $H_K$ respectively that we constructed in \S \ref{s:geo}.

The $k$-Lipschitz map $f_k: H\to H_K$  satisfies the following conditions: 
\begin{enumerate}
\item $f_k$ stretches  each long edge $\ell_i$ of $H$ by the factor $k_i >1$.
\item $f_k$ sends each leaf of $F$ to a leaf of $F^K$  affinely (with respect to the natural arc-length parametrization)  and it sends each leaf of $G$ to a leaf of $G^K$.
\item  The central region is sent to the central region.
\end{enumerate}

Using the coordinates we established on each foliated pentagon $P_i$ (the foliated rectangle with part of the foliated central region), the map $f_k$ is defined by sending the point on $H$ represented by $(u_i, v_i)$ for some $i$ to the point in $H_K$ represented by the 
same 
coordinates $(u_i, v_i)$. This map is clearly a homeomorphism between $H$ and $H_K$. It sends the leaves of $F$ and $G$ on $H$ to those on $H_K$, and the central region  to the central region.
We will show a few  properties of the map $f_k$, concluding that this map is Lipschitz extremal between the pair of right-angled hexagons.

\begin{theorem}\label{th:K}
The map $f_k$ is an extremal $k$-Lipschitz map from $H$ to $H_K$ for all $K>1$.
\end{theorem}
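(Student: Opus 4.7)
The plan is to deduce the statement from Lemma \ref{lip.const} applied to $f_k:H\to H_K$. By construction in the coordinates $(u_i,v_i)$, the map preserves the two orthogonal foliations: leaves of $F$ go to leaves of $F^K$ affinely with respect to hyperbolic arc length, and leaves of $G$ go to leaves of $G^K$. Moreover, picking an index $i_0$ realizing $k_{i_0}=k$, the long edge $s_{i_0}$ is itself the geodesic leaf $F_{i_0}(0)$ and is sent, affinely along the leaf, to the geodesic segment $s_{i_0}^K$ with length multiplied by exactly $k$; this furnishes the open geodesic segment required by the extremality clause of Lemma \ref{lip.const}. The task is therefore reduced to two analytic bounds: $f_k$ is $k$-Lipschitz along every $F$-leaf and strictly contracting along every $G$-leaf. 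Once both are established, Lemma \ref{lip.const} delivers $\operatorname{Lip}(f_k)\leq k$, and combined with the witness $s_{i_0}$ it also gives extremality.

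I would work pentagon by pentagon. In the rectangular subregion $R_i\subset P_i$, which is a hypercyclic strip bounded by $s_i$, the parallel hypercycle $F_i(1)$, and the two perpendicular short-edge segments of length $L_i$, the leaf $F_i(u)$ at hyperbolic depth $uL_i$ from $s_i$ projects onto the entirety of $s_i$ and hence has hyperbolic length $2\ell_i\cosh(uL_i)$. Since $f_k$ is affine with respect to hyperbolic arc length along each $F$-leaf, the stretch factor along $F_i(u)$ in $R_i$ equals
\[
k_i\cdot\frac{\cosh(uL_i(K))}{\cosh(uL_i)}\leq k_i\leq k,
\]
where the inequality uses that forward deformations decrease $L_i$, as shown by the closed formula for $\tanh L(K)$ at the end of Section 4 (where it is noted that $L(K)\to 0$ as $K\to\infty$). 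Along the $G$-direction in $R_i$, the leaves are geodesic perpendiculars to $s_i$ of length $L_i$, sent by affine reparametrization to perpendiculars of length $L_i(K)<L_i$, so the map is strictly contracting with ratio $L_i(K)/L_i$.

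In the central region $C$ I would place $O$ at the center of the Poincar\'e disc and invoke Proposition \ref{central}: because the angles $\alpha_1,\alpha_2,\alpha_3$ are preserved by the deformation, the central region of $H_K$ is the Euclidean homothetic image of that of $H$, with scaling factor $s(K)/s>1$. Using the relation $t=\cos\alpha_i/(1+\sin\alpha_i)\,s$ from Equation (\ref{f:combining}) together with the trigonometric formulae of Section 3, one would compute the hyperbolic length of each $F_i$-leaf and each $G_i$-leaf inside $C$ in closed form as a function of the Euclidean parameter $s$, match the stretch factors with the rectangular estimates on the interface hypercycle $F_i(1)$, and verify that the $F$-stretch stays bounded by $k_i\leq k$ while the $G$-stretch stays strictly below $1$. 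This central-region estimate is the main obstacle: a Euclidean homothety dilates hyperbolic lengths non-uniformly with distance from $O$, so the hyperbolic stretch factors do not follow directly from the Euclidean scaling, and one has to combine the specific hypercyclic and perpendicular-geodesic shape of the leaves with the angle-preserving nature of the deformation to push the rectangular bound through $C$.
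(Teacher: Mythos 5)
Your overall strategy is the same as the paper's: reduce to Lemma \ref{lip.const} via directional Lipschitz estimates along the two foliations, with the long edge $s_{i_0}$ realizing $k_{i_0}=k$ as the geodesic witness for extremality, and your computation in the rectangular strips $R_i$ (stretch factor $k_i\cosh(u L_i^K)/\cosh(u L_i)\le k$ along $F$-leaves, contraction factor $L_i^K/L_i$ along $G$-leaves) coincides with the paper's. However, you have left the central region unproved: you describe what ``one would'' do there and correctly identify it as the main obstacle, but this is precisely where essentially all of the work in the paper's proof lies. The paper handles $C$ by three separate estimates: (i) the stretch along each tripod edge $OA_i$ at $A_i$ is $(\cosh L_j^K/\cosh L_j)\,k_j<k$, obtained by transporting an infinitesimal interval to the long edge via the nearest-point projection and back; (ii) the stretch along the hypercyclic leaves $F_i(u_i)$, $1\le u_i\le 2$, is $k_i\cosh(\tilde u_i L_i)/\cosh(\tilde u_i^K L_i^K)<k$, where $\tilde u_i$ is a renormalized depth function and one must argue $\tilde u_i^K(u_i)<\tilde u_i(u_i)$ from the tighter packing of the leaves $F_i^K$; (iii) the stretch along the $G_i$-leaves in $C$ is controlled by showing the ratio $R_i=(dw_i^K/du_i)/(dw_i/du_i)$ is increasing and bounded by $d(K)/d$, via a convexity comparison of the functions $w_i$ and $w_i^K$. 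None of this is supplied or replaced by an alternative argument in your proposal.

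There is also a concrete error in your stated plan for $C$: you propose to verify that ``the $G$-stretch stays strictly below $1$'' there. This is false. The forward deformation lengthens the tripod, $d(K)>d$, so the $G_i$-leaves near the center $O$ are \emph{expanded}, with stretch factor approaching $d(K)/d>1$ on the leaf $G_i(1)$. What is actually true, and what the paper proves, is the weaker bound $\mathrm{Lip}(f_k|_{G_i(v_i)})<d(K)/d<k$ in the central region; this still yields $\|df_k\|\le k$ by the max-of-directional-derivatives argument inside the proof of Lemma \ref{lip.const}, but it means the literal ``contracting along $G$'' hypothesis of that lemma does not hold on all of $H$, so the lemma cannot be invoked verbatim as you propose. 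You would need to either restate the lemma with the hypothesis that both directional stretches are bounded by $k$, or apply it region by region and glue, which is what the paper effectively does.
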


We first recall another result in hyperbolic geometry stated as a lemma:

\begin{lemma}\label{lem:part}
A segment of the hypercycle $F(u_i)$ with $0 \leq u_i \leq 1$, which projects onto a  
geodesic segment of length $r>0$ on the long edge $s_i$ via the nearest point projection map $\pi_i: P_i \rightarrow s_i$, has hyperbolic length $r(u_i)$, which is given explicitly by 
\[
r(u_i) = \cosh (u_i L_i) r.
\]
\end{lemma}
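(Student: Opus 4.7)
The plan is to reduce the claim to the classical identity that a hypercycle arc at hyperbolic distance $\rho$ from its base geodesic has length equal to $\cosh(\rho)$ times the length of its orthogonal projection onto that geodesic. This is immediate from Fermi coordinates $(\rho, s)$ based at the geodesic $s_i$, in which the hyperbolic metric takes the form $d\rho^2 + \cosh^2\rho\, ds^2$: restricting to a level set $\{\rho = \rho_0\}$ and integrating the induced line element $\cosh(\rho_0)\, ds$ over the projected interval of length $r$ yields arc length $\cosh(\rho_0)\, r$. The nearest point projection map in Fermi coordinates is $(\rho, s) \mapsto (0, s)$, so this is precisely the statement that the hypercyclic arc at height $\rho_0$ projecting onto a geodesic segment of length $r$ has hyperbolic length $\cosh(\rho_0)\, r$.

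It then remains only to identify the distance from $s_i$ of the leaf $F(u_i)$ as $u_i L_i$. But this is exactly how $F_i$ was parametrized in Section~\ref{s:geo}: by definition, for $0 \leq u_i \leq 1$ the leaves $F(u_i)$ are the hypercycles equidistant from the long edge $s_i$, interpolated \emph{proportionally to the hyperbolic distance from $s_i$}, with $F(0) = s_i$ (at distance $0$) and $F(1)$ the hypercycle bounding the central region (which by construction of $R_i \subset P_i$ sits at distance $L_i$ from $s_i$). Hence $F(u_i)$ is the hypercycle at hyperbolic distance $\rho_0 = u_i L_i$ from $s_i$, and substituting this into the length formula gives $r(u_i) = \cosh(u_i L_i)\, r$, as claimed.

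Since the argument is a direct unpacking of the definition of the parametrization of $F_i$ combined with one standard Fermi-coordinates computation, I do not anticipate any real obstacle. The only point worth being careful about is the sign convention when $L_i \leq 0$ (Types II, III), but the lemma is stated for $0 \leq u_i \leq 1$ within a pentagon $P_i$ having $L_i > 0$, so the hypercyclic region is nondegenerate and the Fermi coordinates are valid throughout.
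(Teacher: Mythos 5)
Your proof is correct and is essentially the paper's argument: both reduce the lemma to the classical fact that nearest-point projection from a hypercycle at hyperbolic distance $\rho$ onto its base geodesic scales lengths by $\cosh\rho$, and then read off $\rho = u_i L_i$ from the parametrization of $F_i$. The only difference is in how that standard fact is verified --- the paper does an explicit upper half-plane computation (the hypercycle is a Euclidean ray of slope $(\sinh (u_i L_i))^{-1}$ through the origin, projected along concentric circles onto the positive $y$-axis, with the ratio of tangent-vector norms coming out to $\cosh(u_i L_i)$), whereas you invoke the Fermi-coordinate form $d\rho^2+\cosh^2\rho\,ds^2$ of the metric, which packages the same computation.
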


\begin{proof}   \begin{figure}[!hbp] 
\centering
 \psfrag{ui}{\small $u_i$}
  \psfrag{1ui}{\small $1-u_i$}
   \psfrag{rui}{\small $r(u_i)$}
    \psfrag{r}{\small $r$}
  \psfrag{d}{\small $d$}
   \psfrag{O}{\small $O$}
     \psfrag{si}{\small $s_i$}
\psfrag{Li}{\small $L_i$}
  \psfrag{Fui}{\small $F(u_i)$}
\includegraphics[width=0.60\linewidth]{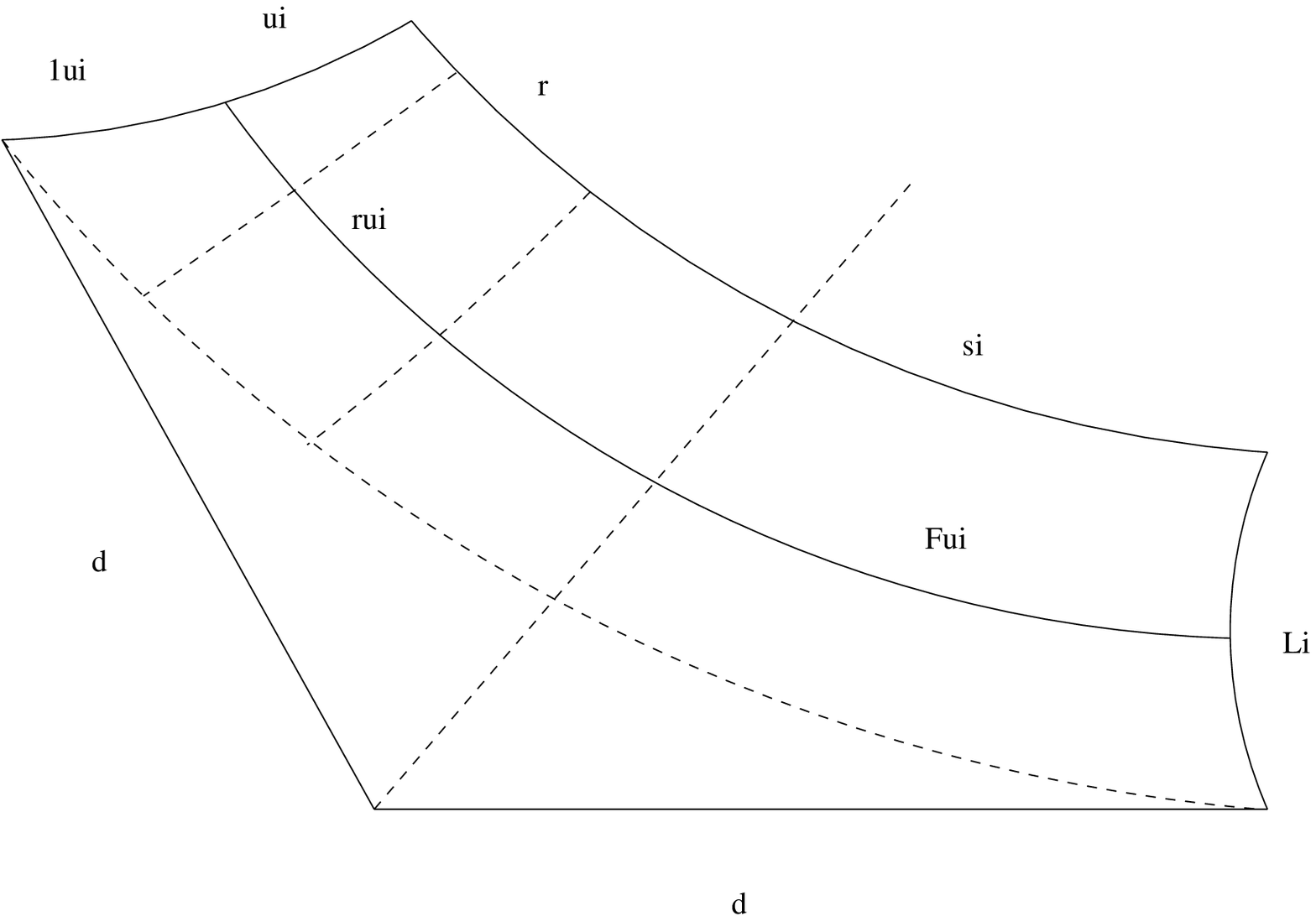}   
 \caption{{\small}}  \label{fig-new}  
\end{figure}
We use the following elementary fact from the hyperbolic geometry of the upper-half plane model: The ray through the origin of slope $m = (\sinh u_i L)^{-1}$ is a set of points which are equidistant of $u_i L$ from the geodesic represented by the positive $y$-axis. (See Figure \ref{fig-new}.) Then the hypercycle is represented by an oblique ray.  The fibers of the nearest point projection map from the oblique ray to the positive $y$-axis are (subsets of) concentric circles centered at the origin.  Hence in order to prove the lemma it suffice to compute the hyperbolic stretching factor between the geodesic line segment between $(0, t_1)$ and $(0, t_2)$ with $t_2 > t_1$, and the hypercycle segment 
represented by the line segment between $(\frac{1}{\sqrt{1 + m^2}}t_1, \frac{m}{\sqrt{1 + m^2}}t_1)$ and $(\frac{1}{\sqrt{1 + m^2}}t_2, \frac{m}{\sqrt{1 + m^2}}t_2)$.  We carry out this computation by comparing the hyperbolic norm of two paths $\sigma(t) = (0, t)$ and $\rho(t) = ((\frac{1}{\sqrt{1 + m^2}}t, \frac{m}{\sqrt{1 + m^2}}t)$ defined over $[t_1, t_2]$.

  Recall that the length element in the upper-half plane is given by
\[
ds^2 = \frac{dx^2 + dy^2}{y^2}.
\]
Then the hyperbolic norms of the tangent vectors are given by
\[
\| \sigma'(t) \| = \frac1t, \quad \| \rho'(t) \| = \frac1t \frac{\sqrt{1+m^2}}{m}. 
\]
By taking the ratio and replacing $m$ by $(\sinh u_i L)^{-1}$, it follows that 
\[
\frac{\| \rho'(t) \|} {\| \sigma'(t) \|} = \frac{\sqrt{1+m^2}}{m} = \cosh u_i L,
\]
a number independent of $t$.  This proves that the nearest point projection induces a stretching between the geodesic and the hypercycle by the factor $\cosh u_i L$.
\end{proof}

\begin{proof}[Proof of Theorem \ref{th:K}]  
Lemma \ref{lem:part} implies that each leaf $F_i(u_i)$ of the foliations $F_i$,  for $0 \leq u_i \leq 1$,   is obtained by  
stretching the geodesic segment $s_i = F_i(0)$ by the constant factor $\cosh (u_i L_i)$ and that each leaf $F_i^K(u_i)$ of the foliations $F_i^K$,  for $0 \leq u_i \leq 1$,   is obtained by  
stretching the geodesic segment $s_i^K = F_i^K(0)$ by the constant factor $\cosh (u_i L^K_i)$
where $L_i^K$ is the length corresponding to $L_i$ in the right-angled hexagon $H_K$. 
 We note that $L^K_i < L_i$. 
 
  Since the map $f_k$ sends the leaf $F_i(u_i)$ to $F_i^K(u_i)$, and the long edge $s_i$ of length $2\ell_i$ to the long edge $s_i^K$ of length $2k_i \ell_i$,  the Lipschitz constant of $f_k$ restricted to the leaf $F_i(u_i)$ is
given by
\[
{\rm Lip}(f_k \big|_{F_i(u_i)}) = k_i \frac{\cosh (u_i L_i)}{\cosh (u_i L_i^K)} \leq k 
\]
where the inequality follows from $L^K_i < L_i$.  

Concerning the foliation $G_i$, whose leaves are perpendicular to the leaves of $F_i$,  for $0 \leq v_i \leq 2$, each leaf $G_i(v_i)$ of length $L_i$, namely the part of the leaf in the quadrilateral  $ P_i \setminus C$,  is sent to the part of the leaf $G_i^K(v_i)$ in the quadrilateral $ P_i^K \setminus C^K$ of length $L_i^K$. Thus the Lipschitz constant is
\[
{\rm Lip}\Big( f_k \big|_{G_i(v_i)} \Big) = \frac{L_i^K}{L_i} < 1 < k. 
\] 
 
Our next task is to obtain a control on the Lipschitz constant  of $f_k$ in the central region of the hexagon.  We divide the central region $C$ into the three sectors $\{P_i \cap C\}_{i=1,2,3}$, whose interface is the geodesic tripod $T$ centered at the origin $O$ of the Poincar\'e disc, as discussed in \S \ref{s:geo}.

Let $d(K)$ be the edge length of the tripod in the right-angled hexagon $H_K$, spanning the central region $C_K$.  From 
the preceding discussion, we have $d(K) > d=d(1)$.  We claim now that the map $f_k$ restricted to each tripod edge has 
Lipschitz constant bounded above by $k$. 

 Since (by definition) the map $f_k$ is an affine map on each edge, to show the 
Lipschitz constant bound, it suffices to obtain an estimate at one point.  We do so at the point $A_1$.  Let $\triangle s$ be 
an infinitesimal interval along the edge $OA_1$ at $A_1$. By abuse of notation, we denote by $\triangle s$ the hyperbolic length of that interval. Note that the edge $OA_1$ and the hypercycle $F_2(1, v_2) \,
\,\, (0 \leq v_2 \leq 2) $ are tangent at $A_1$. Thus  $\triangle s$ corresponds to $\tilde{\triangle} s \subset s_2$ which is a multiple $
\triangle s/ \cosh L_1$ of $\triangle s$ on $s_2$. This follows from  Lemma \ref{lem:part}  via the nearest point projection 
$OA_1 \rightarrow s_2$ onto the long edge $s_2$.  The long edge $s_2$ is stretched by $f_k$ by the factor $k_2$, and 
the interval $\tilde{\triangle} s$  is mapped to $ \hat{\triangle} s \subset s_2^K$. Thus the length element $\triangle s$ on 
$OA_1$ now corresponds to 
$\hat{\triangle} s = k_2 \, \triangle s / \cosh L_1$ on $s_2^K$.  Finally, via the canonical correspondence between the long 
edge $s_2^K$ and the hypercycle $F_2^K(1, v_2) \,\,\, (0 \leq v_2 \leq 2)$ along the fibers of the  nearest point projection 
$OA_1^K \rightarrow s_2^K$, $\triangle s$ corresponds to an infinitesimal interval $\overline{\triangle} s \subset F_2^K(1, 
v_2) $ a multiple of $\triangle s$:
\[
\frac{1}{\cosh L_1} k_2 \cosh L_1^K \triangle s.
\]
The hypercycle  $F_2^K(1, v_2) \,\,\, (0 \leq v_2 \leq 2)$ and the tripod edge $OA_1^K$ are again tangent at $A_1^K$. Using the fact that $L_1^K < L_1$,
we conclude that the Lipschitz constant of $f_k$ restricted to the edge $OA_1$ at $A_1$ is equal to 
\[
\frac{\overline{\triangle} s}{\triangle s} = \frac{ \cosh {L_1^K}}{\cosh L_1}  k  < k.
\]
Therefore, we conclude that the Lipschitz constant of $f_k$ restricted to the edge $OA_1$ is strictly less than $k$.
 
 The region $P_i \cap C$ is foliated by the hypercycles equidistant from the long edge $s_i$, which are the leaves of 
$F_i(u_i)$ for $1\leq u_i \leq 2$, whose endpoints are on the two edges $\overline{OA_j}$ and $\overline{OA_k}$ $(j, k 
\neq i)$ of the tripod bounding the pentagon $P_i$.  Hence $P_i \cap C$ is bounded by the two edges and the boundary 
hypercycle $F_i(1)$ of the central region $P_i \cap C$ equidistant from the long edge $s_i$ by $L_i$.  

We define a new function of $u_i$, 
\[
\tilde{u}_i (u_i) = \frac{\mbox{the hyperbolic distance between $F_i(1)$ and $F_i(u_i)$}}{\mbox{the hyperbolic distance between $F_i(1)$ and the vertex $O$}}
\]
for $1 \leq u_i \leq 2$.  Note that $\tilde{u}_i (1) = 0$ and $\tilde{u}_i (2) = 1$.
We also define $\tilde{u}^K_i(u_i)$ to be the corresponding function for the forward-deformed hexagon $H_K$, which is distinct from $\tilde{u}_i(u_i)$.

We claim that 
\[
{\rm Lip}\Big( f_k \big|_{F_i(u_i)} \Big) = k_i \frac{\cosh (\tilde{u}_i L_i)}{\cosh (\tilde{u}^K_i L_i^K)} < k. 
\]  
This follows from the facts that $L_i^K < L_i$ and that $\tilde{u}_i^K (u_i) < \tilde{u}_i(u_i)$ for $1\leq u_i \leq 2$. The first inequality follows from the 
properties of the forward deformation $f_K$ already seen.  For the second inequality, note that the hypercycles $\{F^K_i(u_i)\}$ 
are more tightly packed in $P_i^K \cap C_K$ near the hypercycle $F_i^K(1)$ than  the hypercycles $\{F^K_i(u_i)\}$ in $P_i \cap C$ near $F_i (K)$, which in turn implies 
$\tilde{u}_i^K (u_i) < \tilde{u}_i(u_i)$.

Now we study the Lipschitz constant of $f_k$ restricted to the leaves of  $G_i$ in the central region $P_i \cap C$. We recall that the leaves $F_i(u_i)$ are parameterized  
proportionally to the arc length of the pair of the tripod edges $\overline{OA_j}$ and $\overline{OA_k}$ $(j, k \neq i)$ of equal length $d$, as $u_i$ varies in $[1, 2]$.  

Let $\delta_i(u_i, v_i)$ be the hyperbolic distance between the point $G_i(v_i) \cap F_i(u_i)$ and the long
edge $s_i$.  First note that the function $\delta_i(u_i, v_i)$ is constant in $v_i$ where it is defined. As $v_i$ 
varies over $[0,2]$, the range of $\delta_i$ varies, and it takes its maximal value for $v_i 
=1$ when the endpoint of the leaf $G_i(1)$ is at the center $O$. 

We define a function $w_i(u_i)$ in $u_i \in [1,2]$ by restricting the function $\delta_i$ to the tripod edge $\overline{OA_j}$ (or equivalently to $ \overline{OA_k}$) and subtracting the constant $L_i$ from it.  Namely, the value of $w(u_i)$ is the distance between the point $(u_i, v_i)$ on $\overline{OA_j}$    (or equivalently to $ \overline{OA_k}$)  and the boundary hypercycle of the central region $P_i \cap C$.  
By 
observing how the leaves $F_i(u_i)$ intersect with the tripod edge $\overline{OA_j}$ modeled on the Poincar\'e disc, with the center $O$ of $H$ identified with the origin, we see that the term $\frac{d w_i}{d u_i}$ is equal to $d \cos \theta_i(u_i)$ 
where $\theta_i$ is the angle between the edge $OA_j$ and the leaf $G_i(v_i)$. Note  that $\theta(1)=\pi/2$ and $\theta_i(u_i)$  monotonically decreases as $u_i$ increases, and goes down to 
the value $\alpha_i = \theta_i(2) < \pi/2$.
  
It then follows that the derivative of $w_i$ in $u_i$ is monotonically increasing and that 
\[
\lim_{u_i \rightarrow 1} \frac{d w_i}{d u_i}  = 0  \mbox{ and }  \lim_{u_i \rightarrow 2} \frac{d w_i }{d u_i} = d \cos \alpha_i .
\]
The latter limit occurs at the vertex $O$.  In other words, the function $w_i$ is convex in $u_i$.

Defining the function $w_i^K$ for $H_K$ analogously, we have similarly:
\[
\lim_{u_i \rightarrow 1} \frac{d w_i^K}{d u_i}  = 0  \mbox{ and }  \lim_{u_i \rightarrow 2} \frac{d w_i^K }{d u_i} = d(K) \cos \alpha_i .
\]

We now claim that the function 
\[
 R_i(u_i) =\Big( \frac{d w_i^K}{d u_i} \Big) /   \Big( \frac{d w_i}{d u_i} \Big)
\]
is increasing in $u_i \in [1, 2]$ and bounded above by:
\[
\lim_{u_i \rightarrow 2} \Big(\frac{d w_i^K(u_i)}{d u_i} \Big)/\Big(\frac{d w_i(u_i)}{d u_i}\Big) = \frac{d(K)}{d} < k.
\] 
Namely, the claim says that the biggest stretch by $f_k$ along 
the leaves of $G_i$ occurs at the center of the hexagon on the leaf $G_i(1)$.  The upper bound $ d(K)/d$ is 
obtained from the ratio between the separation distances of leaves for $F^K$ and $F$ near the center of 
the hexagon, identified with the origin of the Poincar\'e disc.

This follows from the observation that for a given value of $u_i \in (1, 2)$, we have $\theta_i^K(u_i) > \theta_i(u_i)$, which follows from the comparison of the behavior of the Poincar\'e metric of the disc between the two Euclidean-homothetic regions $C$ and $C_K$.  Geometrically, when looking at the distributions of the leaves of  $F_i(u_i)$ and $F_i^K(u_i)$ on  $C$ and on $C_k$, which are
mutually Euclidean-homothetic in the Poincar\'e disc,  the leaves for $F_i^K$ are more tightly packed  than 
the leaves of $F_i$  near $u_i =1$ (Figure \ref{region}).   
 \begin{figure}[!hbp] 
\centering
 \psfrag{f}{\small $f_K$}
  \psfrag{P}{\small $P_i\cap C$}
   \psfrag{Pk}{\small $P_i^K\cap C_K$}
    \psfrag{O}{\small $O$}
  \psfrag{F}{\small $F_i(u_i)$}
   \psfrag{Fk}{\small $F^k(u_i)$}
\psfrag{ui}{\small $u_i$}
  \psfrag{u1}{\small $(1-u_i)$}
\includegraphics[width=0.80\linewidth]{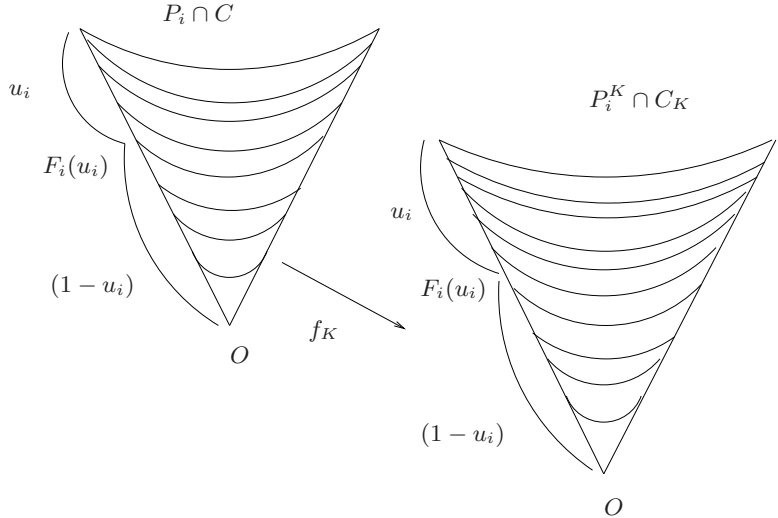}    
\caption{\small {The foliations in the central region of the hexagon.}}   \label{region}  
\end{figure} 
Hence  for a  particular  pair of leaves $F_i(u_i)$ and $F_i^K(u_i) = f_K[F_i(u_i)]$, we have the inequality $\theta_i^K(u_i) > \theta_i(u_i)$. This comparison in turn shows that as a convex function of $u_i$, $w_i(u_i)$ is more convex than the convex function $\frac{d}{d(K)}w_i^K(u_i)$. 
This implies the claim about $R_i(u_i)\leq\frac{d(K)}{d}$.

This in turn says that the rate of change
\[
\frac{d w_i^K}{d w_i} = \Big(\frac{d w_i^K}{d u_i} \Big)/\Big(\frac{d w_i}{d u_i}\Big) 
\]
is bounded above by $k_i$, that is, since the map $f_k$ send $P_i \cap C$ onto $P_i^K \cap C^K$, the stretching factor of the leaves of $G_i$ is strictly less than $k_i$.

Combining these results, we conclude that for $1 \leq u_i \leq 2$, 
\[
{\rm Lip} \Big(f_k \big|_{G_i(v_i)} \Big) < \frac{d(K)}{d}  < k_i.
\] 

Now the map $f_k$ sends the central region $C$ to $C_K$ and each open set $P_i$ to $P_i^K$ with 
norm $\|df_K\|$ of the differential bounded by $k$ and it sends the tripod $T$ to the tripod $T_K$ 
with constant stretch ratio of $d(K)/d< k$.  These three regions $P_i$ can be glued 
along the tripod $T$, and the stretching Lipschitz constant remains less than $k$, proving that $f_k:H \rightarrow H_K$ 
is $k$-Lipschitz. 
 
\end{proof}

\section{ideal triangulation of a surface and the canonical deformations}

So far we have considered a single right-angled hexagon, and its forward deformations. We consider now a hyperbolic surface $S$ with boundary which is partitioned 
by a maximal sub-system of disjoint geodesic arcs into right-angled hexagons. Such a partition is usually called an ideal triangulation $T$ on $S$ (see e.g. \cite{Luo}.)  We 
denote this combinatorial data by $(S, T)$.  The gluing edges of the hexagons whose union constitutes the surface $S$ are considered as the long edges,
and the edges appearing as part of the boundary components of $S$ are considered as the short edges.  

We use the notation of \S \ref{s:geo}. By stretching each long edge $s_i$ of a hexagon by a factor $k_i>1$ satisfying Equation (\ref{eq:k_i}), the canonical deformations between hexagons define a map, which we still denote $f_k: S \rightarrow S_K$, from the hyperbolic surface with boundary $S$ to the hyperbolic surface with boundary $S_K$ obtained by gluing the image hexagons. Here, $k$ is the maximum among the stretching factors $k_i$, with $i$ indexing the long edges of all the right-angled hexagons. This map $f_k$ makes each boundary component shorter.   

\begin{proposition}
The Lipschitz map $f_k: S \rightarrow S_K$ is an extremal $k$-Lipschitz map. That is, this map is a $k$-Lipschitz homeomorphism, and for any $k' < k$, there is no $k'$-Lipschitz homeomorphism from $S$ to $S_K$ in the same homotopy class preserving the boundary.
\end{proposition}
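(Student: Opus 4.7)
The plan is to assemble the hexagon maps from Theorem~\ref{th:K} into a map on $S$, and then to certify extremality using a single edge of the triangulation $T$.

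First I would verify that the local hexagon maps glue into a homeomorphism $f_k: S \to S_K$. For a long edge of hyperbolic length $\ell_i$, the stretch factor $k_i$ is determined by $\ell_i$ and the global parameter $K$ via $\cosh(k_i\ell_i) = K\cosh \ell_i$, so the two hexagons sharing that edge assign to it the same $k_i$. Restricted to the shared edge, $f_k$ is, from either side, the affine stretch by $k_i$ with respect to arc length, so the local maps match along the interior $1$-skeleton of $T$; they also match at vertices on $\partial S$ since $S_K$ is by construction the gluing of the image hexagons with the same combinatorial pattern as $S$. Each local map is a homeomorphism onto its image, so the glued map is a homeomorphism.

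Next I would pass from pointwise to global Lipschitz estimates. By Theorem~\ref{th:K} the restriction of $f_k$ to each closed hexagon $H$ is $k_H$-Lipschitz with $k_H = \max_{i\in H} k_i \leq k$. A rectifiable path joining two points $p,q \in S$ can be subdivided into finitely many sub-paths each contained in a single closed hexagon, and the image length is therefore at most $k$ times the total length. Minimizing over such paths yields $d_{S_K}(f_k(p),f_k(q)) \leq k\,d_S(p,q)$, and hence $\mathrm{Lip}(f_k)\leq k$.

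For extremality, I would pick a long edge $s_*$ of $T$ at which $k_*=k$ is attained. As an edge of the maximal arc system $T$, $s_*$ is an essential arc in $S$ meeting $\partial S$ perpendicularly, hence the geodesic representative of its homotopy class in $\mathcal{B}$, and $\ell_{s_*}(g)$ equals its hyperbolic length. Its image $f_k(s_*)$ is the long edge $s_*^K$ of the corresponding hexagon in $S_K$, again a geodesic arc orthogonal to $\partial S_K$, so $\ell_{s_*}(h) = k\,\ell_{s_*}(g)$. For any $k'$-Lipschitz homeomorphism $g'\colon S\to S_K$ in the same homotopy class (preserving the boundary), $g'(s_*)$ lies in the same homotopy class as $s_*^K$, so the length in $h$ of $g'(s_*)$ is at least $\ell_{s_*}(h) = k\,\ell_{s_*}(g)$; on the other hand it is bounded above by $k'\,\ell_{s_*}(g)$, forcing $k'\geq k$.

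The main obstacle I foresee is the gluing step, since the orthogonal coordinate systems on two neighbouring hexagons do not extend smoothly across their common long edge; what rescues the argument is that only the trace on the shared edge enters into the continuity and Lipschitz bound, and there both sides reduce to the same affine stretch by the common factor $k_i$. Once this is settled, the extremality step is immediate: $s_*$ is by construction a perpendicular geodesic arc in both structures, so it witnesses the lower bound $k$ on the Lipschitz constant of every competitor.
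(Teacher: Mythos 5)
Your proof is correct, and it globalizes the hexagon result by a different mechanism than the paper. The paper's own proof is a two-line appeal to Lemma~\ref{lip.const}: it observes that the pairs $(F^\alpha,G^\alpha)$ and $(F^\beta,G^\beta)$ on adjacent hexagons glue across a shared long edge into a single orthogonal pair of partial foliations on $S$, so that the glued map is $k$-Lipschitz along the $F$-leaves and contracting along the $G$-leaves, and the lemma then gives $\mathrm{Lip}(f_k)\le k$ globally. You instead take Theorem~\ref{th:K} as a black box on each closed hexagon and pass to the global bound by subdividing paths; this spares you any verification that the foliations match across the $1$-skeleton, at the cost of the local-to-global step, where your phrase ``a rectifiable path can be subdivided into finitely many sub-paths each contained in a single closed hexagon'' should be sharpened (an arbitrary rectifiable path may cross the skeleton infinitely often; restrict to piecewise-geodesic paths, which meet each geodesic edge at most once when short, or invoke the standard fact that a locally $k$-Lipschitz map on a length space is globally $k$-Lipschitz). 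Two things in your write-up are genuine improvements on the paper's exposition: you check explicitly that the two hexagons adjacent to an edge assign it the \emph{same} stretch factor $k_i$ (since $k_i$ depends only on $\ell_i$ and $K$) and that both induced boundary maps are the affine stretch normalized at the midpoint, which is exactly what makes the image hexagons glue into $S_K$; and your extremality argument via the maximally stretched edge $s_*$ --- comparing the geodesic length of its class in $\mathcal{B}$ under $h$ with the upper bound $k'\ell_{s_*}(g)$ forced on any competitor homeomorphism --- is the full statement needed for extremality \emph{in the homotopy class}, whereas the ``furthermore'' clause of Lemma~\ref{lip.const} only certifies that $k$ is the best constant for the particular map $f_k$, and the paper leaves the homotopy-class version implicit.
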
  

\begin{proof}
We use Lemma \ref{lip.const} and we note that when two hexagons $H^\alpha$ and $H^\beta$ are 
glued along a long edge $e$, the foliations $F^\alpha, G^\alpha$ and $F^\beta, G^\beta$ are glued 
together to form a  new orthogonal pair of foliations on the union of the image hexagons. The constructed maps between the hexagons satisfy the properties required by the proposition, and the hexagons glue together as well. 
\end{proof}

The following theorems are proved in the same way as Theorem 7.3 of \cite{PT2}; we do not repeat the arguments here. In the following statements, we take the point of view where each map $f_k: S \rightarrow S_K$ is an element, which we also denote by $(S,f_k)$, of the Teichm\"uller space of $S$, that is, a marked hyperbolic structure.

\begin{theorem}
By letting $K$ vary from 1 to $\infty$, we get a family of maps $f_k: S=S_1 \rightarrow S_K$ which, as a path in Teichm\"uller space,  is a geodesic for the arc metric and for the Lipschitz metric on $\mathcal{T}(S)$.
\end{theorem}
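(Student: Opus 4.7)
The plan is to verify both geodesic claims by computing, for every pair $1 \le K_1 < K_2$, the arc and Lipschitz distances from $S_{K_1}$ to $S_{K_2}$ explicitly in terms of the canonical stretching factor $k(K_1, K_2) := \max_i \ell_i(K_2)/\ell_i(K_1)$, the maximum being taken over the long edges of the fixed triangulation $T$, and then to verify that this quantity is additive along the parameter.

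For the upper bound, I would apply the canonical construction starting from $(S_{K_1}, T)$ with deformation parameter $K_2/K_1$. The resulting target is exactly $(S_{K_2}, T)$, and the preceding proposition shows that the associated map is $k(K_1, K_2)$-Lipschitz. Hence $L(S_{K_1}, S_{K_2}) \le \log k(K_1, K_2)$. The general inequality $\mathcal{A}(g, h) \le L(g, h)$, which holds because any Lipschitz map sends a geodesic representative of a class in $\mathcal{B}$ to a competitor whose length controls the geodesic length of its image, then transfers this bound to $\mathcal{A}$.

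For the matching lower bound I would test the arc metric against the long edges of $T$ themselves. Each such edge $s_i$ is an essential arc, hence an element of $\mathcal{B}$, and by construction its geodesic representative in $(S_{K_j}, g_j)$ is again $s_i$ with length $2\ell_i(K_j)$, since the edges of $T$ remain geodesic and boundary-orthogonal throughout the deformation. Substituting into formula (\ref{eq:arc}) yields
\[
\mathcal{A}(S_{K_1}, S_{K_2}) \;\ge\; \sup_i \log \frac{\ell_i(K_2)}{\ell_i(K_1)} \;=\; \log k(K_1, K_2),
\]
and combining with the previous paragraph gives $\mathcal{A}(S_{K_1}, S_{K_2}) = L(S_{K_1}, S_{K_2}) = \log k(K_1, K_2)$.

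To conclude the geodesic property, I would establish additivity $\log k(K_1, K_3) = \log k(K_1, K_2) + \log k(K_2, K_3)$ for $K_1 < K_2 < K_3$. The inequality $\le$ is contained in the triangle inequality for the two metrics. For $\ge$, the key observation is that since $\cosh \ell_i(K) = K \cosh \ell_i$, one can set $h(\ell) = \cosh^{-1}(c \cosh \ell)/\ell$ for $c>1$ and show by a direct computation on $h'(\ell)$, using $\sinh y > c \sinh \ell$ and $y > \ell$ where $y = \cosh^{-1}(c \cosh \ell)$, that $h$ is strictly decreasing in $\ell$. This implies that the index $i^*$ realizing $k(K_1, K_2)$ is always the long edge of smallest initial length, independent of the pair $(K_1, K_2)$, since the deformation preserves the ordering of edge lengths. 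Consequently
\[
k(K_1, K_3) \;=\; \frac{\ell_{i^*}(K_3)}{\ell_{i^*}(K_1)} \;=\; k(K_1, K_2)\cdot k(K_2, K_3),
\]
giving the required additivity. The main obstacle is precisely this uniform-maximizer lemma, since without it additivity could fail when different long edges realize the maximum at different stages of the deformation; once it is in place, the equality of the two metrics along the path together with the additivity of $\log k$ immediately yields the theorem.
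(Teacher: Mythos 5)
Your argument is correct and follows the same overall scheme that the paper intends (it defers the proof to Theorem 7.3 of \cite{PT2}): the extremal $k$-Lipschitz map gives $L(S_{K_1},S_{K_2})\le\log k(K_1,K_2)$; the general inequality $\mathcal{A}\le L$ transfers this bound to the arc metric; the long edges of the triangulation, being geodesic arcs meeting the boundary orthogonally and hence geodesic representatives of classes in $\mathcal{B}$, give the matching lower bound; and additivity of $\log k$ along the parameter yields the geodesic property. Where you genuinely add something is the uniform-maximizer lemma. In \cite{PT2} the hexagons are ${\mathbb Z}_3$-symmetric, so all stretch factors $k_i$ coincide and additivity is automatic; in the general setting of the present paper one must rule out the possibility that different long edges realize $\max_i k_i$ on different subintervals of the deformation, and the paper leaves this point implicit. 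Your monotonicity computation is correct: with $y=\cosh^{-1}(c\cosh\ell)$ one has $\sinh y>c\sinh\ell$, hence $y'<1$, hence $y'\ell<\ell<y$ and $(y/\ell)'<0$, so the maximum is always achieved on the shortest long edge, whose identity is preserved since $\ell\mapsto\cosh^{-1}(c\cosh\ell)$ is increasing. Two minor points you should make explicit: (i) why the canonical construction applied to $(S_{K_1},T)$ with parameter $K_2/K_1$ reproduces $S_{K_2}$ as a \emph{marked} surface, i.e.\ that the identifications along the long edges are compatible (this holds because the edge maps are affine and fix midpoints, so the gluings agree); and (ii) the degenerate case $\ell_i=0$, where the side is at infinity and $k_i$ is not defined by the ratio $\ell_i(K)/\ell_i$, should be excluded or treated separately.
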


\begin{theorem}
The Lipschitz and the arc metrics on  $\mathcal{T}(S)$ coincide on the path $(S,f_k)$, $k\geq 1$. More precisely, we have, for any $K_1\leq K_2$, 
\[d([(S,f_{K_{1}})], [(S,f_{K_{2}})])= L([(S,f_{K_{1}})], [(S,f_{K_{2}})])
\]
where $d$ and $L$ are the arc and Lipchitz metrics respectively. 
\end{theorem}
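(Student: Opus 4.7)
The plan is to show $\mathcal{A}=L=\log k$ between $(S,f_{K_1})$ and $(S,f_{K_2})$, where
$$k=\max_i \frac{\ell_i(K_2)}{\ell_i(K_1)}$$
is the maximal stretching factor over the long edges of the ideal triangulation $T$. I would use a sandwich argument: the general inequality $\mathcal{A}\leq L$ combined with an explicit upper bound on $L$ and a matching lower bound on $\mathcal{A}$.

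First, I would recall the general inequality $\mathcal{A}(g,h)\leq L(g,h)$, which holds between \emph{any} two points of $\mathcal{T}(S)$: if $f:(S,g)\to(S,h)$ is a $K$-Lipschitz homeomorphism homotopic to the identity, then for any $\gamma\in\mathcal{B}$, the image under $f$ of its $g$-geodesic representative is an arc in the same relative homotopy class of $h$-length at most $K\ell_\gamma(g)$, which bounds $\ell_\gamma(h)$ from above. Taking $\sup_\gamma$ and then $\inf_f$ gives the inequality.

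Next, for the upper bound on $L$, I would apply the surface version of Theorem \ref{th:K} (the previous proposition) \emph{directly} to the pair $(S_{K_1},S_{K_2})$, viewed as two marked structures obtained from the same hexagonal decomposition by canonical deformations with parameters $K_1$ and $K_2$. The construction of Section 6, applied with $S_{K_1}$ as the source, produces a $k$-Lipschitz homeomorphism in the correct homotopy class, yielding
$$L\bigl([(S,f_{K_1})],[(S,f_{K_2})]\bigr)\leq \log k.$$

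Finally, for the matching lower bound on $\mathcal{A}$, I would select an index $i^*$ attaining the maximum defining $k$. The corresponding long edge $\gamma_{i^*}$ is a geodesic arc of the triangulation meeting $\partial S$ orthogonally, hence the unique geodesic representative of its class in $\mathcal{B}$, and by construction $\ell_{\gamma_{i^*}}(S_{K_j})=\ell_{i^*}(K_j)$ for $j=1,2$. Feeding this specific $\gamma_{i^*}$ into the supremum defining $\mathcal{A}$ yields
$$\mathcal{A}\bigl([(S,f_{K_1})],[(S,f_{K_2})]\bigr)\;\geq\;\log\frac{\ell_{i^*}(K_2)}{\ell_{i^*}(K_1)}\;=\;\log k,$$
and chaining the three inequalities produces the desired equality. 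The most delicate point is verifying that the canonical deformation construction of Section 6 is multiplicative --- i.e., that the map from $S_{K_1}$ to $S_{K_2}$ is itself a canonical deformation in the sense of that section, with combined stretching factor $\ell_i(K_2)/\ell_i(K_1)$ on each long edge; this follows because the angle moduli $\alpha_1,\alpha_2,\alpha_3$ of each hexagon are preserved along the entire forward-deformation family, so $(S_{K_1},S_{K_2})$ sits inside the same one-parameter family as $(S,S_{K_2/K_1})$ up to a reparametrization.
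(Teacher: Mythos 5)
Your proposal is correct and is essentially the argument the paper intends: the paper itself only says the theorem ``is proved in the same way as Theorem 7.3 of \cite{PT2}'', and that proof is exactly your sandwich $\log k\leq \mathcal{A}\leq L\leq\log k$, with the lower bound coming from the maximally stretched long edge (a geodesic arc in $\mathcal{B}$), the upper bound from the extremal $k$-Lipschitz map of the preceding proposition applied to the pair $(S_{K_1},S_{K_2})$, and the elementary inequality $\mathcal{A}\leq L$ in between. Your observation that $H_{K_2}=(H_{K_1})_{K_2/K_1}$ because $\cosh\ell_i(K)=K\cosh\ell_i$ is multiplicative (with the angle moduli fixed along the family) is precisely the point that makes the upper bound legitimate, so nothing is missing.
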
  

By gluing surfaces with boundary along the totally geodesic boundary components, we also get geodesics for the  Lipschitz metric for surfaces without boundary, as in \cite{PT2}. 

As mentioned in the introduction, the resulting geodesics for a surface $S$ with boundary are  distinct from the geodesics constructed by Thurston using the stretch line construction where the ideal triangles 
spiral along the boundary components of $S$.  For surfaces with more than one boundary component, this follows from the fact that Thurston's path causes the lengths 
of all boundary components change by the same factor, whereas our deformation causes the length of each boundary components  to change at different 
factors in general.   

There is a relation between our coordinates on right-angled hexagons and coordinates constructed by Luo in \cite{Luo} for the Teichm\"uller  space of a surface with boundary.  Luo calls his coordinate functions  {\it radius coordinates}. In our context, they correspond to the sum of the lengths $L_i$.  To be more precise,  consider two right-angled hexagons $H^\alpha$ and $H^\beta$, sharing a long edge of the same length, which we call the edge $e_{\alpha\beta}$. The  edge $e_{\alpha\beta}$  borders two strips, each foliated by leaves equidistant from the edge $e_{\alpha\beta}$ bordered on the other end by the sides of the central regions in $H^\alpha$ and $H^\beta$.  In our notation, the widths of the strips, namely the distances from the shared edge to the respective central regions, are called $L^\alpha(e_{\alpha\beta})$ and $L^\beta(e_{\alpha\beta})$. The radius coordinate of Luo
is equal to 
\[
z(e_{\alpha\beta}) = \frac{L^\alpha(e_{\alpha\beta})+L^\beta(e_{\alpha\beta})}{2}. 
\]  
By setting $E$ to be the set of long edges of an ideal triangulation $T$ with respect to a hyperbolic metric on a surface $S$ with totally geodesic boundary, we can consider the set of functionals
$z: E \rightarrow  {\mathbb R}$ 
on the space of hyperbolic metrics of the surface with totally geodesic boundary.  Luo calls $z(e)$ the radius invariant of $e$, and $z$ the radius coordinate system of $(S, T)$.

In \cite{Luo},  the following is shown: 

\begin{theorem} Given an ideal triangulation $(S, T)$ on a compact surface $S$ with boundary, each hyperbolic metric with totally geodesic boundary on $S$ is determined 
up to isotopy by its radius coordinates.  Furthermore, the image of the map $z$ is a convex polytope satisfying the following two properties: for each fundamental edge cycle $e_1, ..., e_k$
\[
\sum_{j=1}^k  z(e_j) > 0
\]
and for each boundary edge cycle $e_1, ... e_n$ corresponding to the boundary component of length $\ell$, 
\[
\sum_{i=1}^n  z(e_i) = \ell.
\]
\end{theorem}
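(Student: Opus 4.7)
The plan is to establish three things in sequence: injectivity of the map $z$, necessity of the two polytope conditions, and surjectivity of $z$ onto the polytope.

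For injectivity, the key observation is that within each hexagon $H^\alpha$ the three flag-values $L^\alpha(e_1), L^\alpha(e_2), L^\alpha(e_3)$ (allowing signs as in Types II and III of Section~\ref{s:geo}) determine the short edge lengths via $\lambda_i^\alpha = L^\alpha(e_j) + L^\alpha(e_k)$ and hence determine the hexagon up to isometry, by the classical trigonometric rigidity of right-angled hexagons. Therefore it suffices to recover all flag-values $L^\alpha(e)$ from the radius coordinates. For each interior long edge $e_{\alpha\beta}$ the averaging identity $L^\alpha(e_{\alpha\beta}) + L^\beta(e_{\alpha\beta}) = 2 z(e_{\alpha\beta})$ links the two sides. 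If two metrics produced the same $z$, then the discrepancy $\Delta^\alpha(e) = L^\alpha(e) - L'^\alpha(e)$ would satisfy $\Delta^\alpha(e) = -\Delta^\beta(e)$ across every shared edge; propagating this sign-alternation around the combinatorial cycles of $T$ and using the hexagon rigidity (the three long edge lengths determine all three $L_i$'s) produces an overdetermined linear system whose only solution is $\Delta \equiv 0$.

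For the two polytope conditions I would argue directly from the local geometry. Around a boundary component $\partial$ of length $\ell$, the long edge endpoints partition $\partial$ into short edges $t_1, \dots, t_m$; each $t_j$ lies in a unique hexagon $H^{\alpha(j)}$ and its length equals $L^{\alpha(j)}(e) + L^{\alpha(j)}(e')$, with $e, e'$ the two long edges of $H^{\alpha(j)}$ adjacent to $t_j$. Summing over $j$, each long edge incident to $\partial$ is hit precisely the number of times dictated by the edge-cycle combinatorics, producing (after collecting terms in pairs coming from the two adjacent hexagons) the equality $\ell = \sum_i z(e_i)$. For the strict inequality around a fundamental edge cycle, a parallel local computation identifies $\sum_j z(e_j)$ with the signed total width of the hypercyclic strips swept around that cycle; positivity of this width is the geometric condition that the three boundary hypercycles of the central regions meeting the cycle do not overrun each other, which is automatic for any honest hyperbolic metric.

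The main obstacle is surjectivity. Given $z \in \mathbb{R}^E$ satisfying the two conditions, one must produce flag-values $L^\alpha(e)$ satisfying both the linear averaging relations and the nonlinear hexagon compatibility conditions (adjacent hexagons must assign the same length to a shared long edge), and then glue the resulting hexagons into a hyperbolic surface with totally geodesic boundary. The standard route is to count dimensions via the Euler characteristic of $S$ (matching the number of long edges of $T$ with $\dim \mathcal{T}(S)$), and to show that the differential of $z$ is everywhere nonsingular on the candidate polytope. This is typically achieved by exhibiting a strictly convex energy functional on the space of hexagon moduli whose critical point in the $z$-variables is unique and realizes the prescribed coordinates; a connectedness/properness argument then upgrades the local diffeomorphism to a global homeomorphism onto the polytope. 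Convexity of the image is then immediate, since the characterizing conditions are all affine in $z$.
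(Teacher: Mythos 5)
First, be aware that the paper does not prove this statement at all: it is Luo's theorem, quoted with the attribution ``In \cite{Luo}, the following is shown,'' so there is no in-paper proof to compare your attempt against. Judged on its own terms, your proposal correctly identifies the overall strategy that Luo actually uses (a variational principle on the space of hexagon moduli, with the linear conditions cutting out the image polytope), but as written it contains genuine gaps.

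The injectivity argument does not work as stated. The discrepancies $\Delta^\alpha(e)=L^\alpha(e)-L'^\alpha(e)$ do not satisfy a linear system: the constraint tying the three flag-values of a hexagon to its three long edge lengths is a nonlinear hyperbolic-trigonometric relation, so ``hexagon rigidity'' supplies a nonlinear compatibility condition, not additional linear equations, and the phrase ``overdetermined linear system whose only solution is $\Delta\equiv 0$'' is not justified. Even at the level of the sign-chasing itself, the relation $\Delta^\alpha(e)=-\Delta^\beta(e)$ propagated around an edge cycle forces vanishing only for odd cycles; around an even cycle the alternating assignment is perfectly consistent, so nothing is ruled out. This is precisely why the actual proof must route both injectivity and surjectivity through a strictly convex energy functional: neither follows from elementary bookkeeping. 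Your surjectivity paragraph names the right tool but does not construct the functional, verify its convexity, or carry out the properness/connectedness argument, so it remains a plan rather than a proof. Finally, your justification of the strict inequality $\sum_j z(e_j)>0$ around a fundamental edge cycle (``the hypercycles do not overrun each other, which is automatic'') is too quick: as the paper's own discussion of Type~III hexagons shows, individual radius invariants $L_i$ can be negative, so positivity of the sum around a cycle is a genuinely global statement requiring its own argument. The one part of your sketch that is essentially complete is the boundary edge cycle identity $\sum_i z(e_i)=\ell$, which does follow from summing $\lambda_j=L(e)+L(e')$ over the short edges of a boundary component, provided you track multiplicities when an edge meets the same boundary component twice.
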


 Finally, we mention that D. Alessandrini and V. Disarlo informed us that they have work in progress in which they define ``generalized stretch lines" for surfaces with boundary and they prove that every two hyperbolic structures can be joined by a geodesic
segment which is a finite concatenation of such
generalized stretch lines, cf. \cite{AD}.

\medskip

\noindent \emph{Acknowledgements} The authors are grateful to the referee for his thorough reading and his valuable corrections.

 \end{document}